\documentclass[a4paper,reqno,12pt]{amsart}
\usepackage{verbatim}
\usepackage{amssymb,amsmath,amsthm}
\usepackage{amsfonts}
\usepackage{array}

\textwidth=16cm
\oddsidemargin=0.5cm
\evensidemargin=0.5cm
\textheight=21cm

\newtheorem{theorem}{Theorem}[section]
\newtheorem{lemma}{Lemma}[section]
\newtheorem{corollary}{Corollary}[section]
\newtheorem{conjecture}{Conjecture}[section]

\theoremstyle{remark}

\newcommand{\qbin}[2]{\genfrac{[}{]}{0pt}{}{#1}{#2}_q}
\newcommand{\qq}[1]{[#1]_q}

\begin{document}

\setcounter{page}{1}

\title[]{Some $q$-congruences for homogeneous and quasi-homogeneous multiple $q$-harmonic sums}

\author{Kh.~Hessami Pilehrood}
\address{The Fields Institute, 222 College Street, Toronto, Ontario M5T 3J1, Canada}
\email{hessamik@gmail.com}

\author{T.~Hessami Pilehrood}
\address{The Fields Institute, 222 College Street, Toronto, Ontario M5T 3J1, Canada}
\email{hessamit@gmail.com}

\author{R.~Tauraso}
\address{Dipartimento di Matematica, 
Universit\`a di Roma ``Tor Vergata'', 
via della Ricerca Scientifica, 
00133 Roma, Italy}
\email{tauraso@mat.uniroma2.it}

\subjclass[2010]{11M32, 11B65, 05A30}
\keywords{Multiple $q$-harmonic sum, $q$-binomial identity, degenerate Bernoulli numbers, $q$-congruence,
duality relations}

\subjclass[2010]{11B65, 30E05, 11A07.}

\begin{abstract}
We show some new Wolstenholme type $q$-congruences for some classes of multiple $q$-harmonic sums of arbitrary depth with strings of indices composed of ones, twos and threes. Most of these results are $q$-extensions of the corresponding congruences for ordinary multiple harmonic sums obtained by the authors in a previous paper. Finally, we pose a conjecture concerning two kinds of cyclic sums of multiple $q$-harmonic sums.
\end{abstract}

\maketitle

\section{Introduction}

For two $l$-tuples of non-negative integers
${\bf s}:=(s_1, \ldots, s_l)$ and ${\bf t}:=(t_1,\ldots, t_l)$ and a non-negative integer $n,$ we define two classes of multiple $q$-harmonic sums
$$
H_n^q({\bf s};{\bf t})=H_n^q(s_1,\ldots,s_l;t_1,\ldots,t_l)=\sum_{1\le k_1<\cdots<k_l\le n}\frac{q^{k_1t_1+\cdots+k_lt_l}}{[k_1]_q^{s_1}\cdots[k_l]_q^{s_l}},
$$
$$
S_n^q({\bf s};{\bf t})=S_n^q(s_1,\ldots,s_l;t_1,\ldots,t_l)=\sum_{1\le k_1\le\cdots\le k_l\le n}\frac{q^{k_1t_1+\cdots+k_lt_l}}{[k_1]_q^{s_1}\cdots[k_l]_q^{s_l}},
$$
where
$$
\qq{n}=\frac{1-q^n}{1-q}=1+q+\dots+q^{n-1}
$$
is a $q$-analog of the non-negative integer $n$,
and put
$$
H_n^q({\bf s}):=H_n^q(s_1,\ldots,s_l;s_1-1,\ldots,s_l-1), \,\,\, S_n^q({\bf s}):=S_n^q(s_1,\ldots,s_l;s_1-1,\ldots,s_l-1).
$$
The number $l({\bf s}):=l$ is called the depth (or length) and $w({\bf s}):=\sum_{j=1}^ls_j$ is the weight of the multiple harmonic sum.
By convention, we put $H_n^q({\bf s};{\bf t})=S_n^q({\bf s};{\bf t})=0$ if $n<l$, and $H_n^q(\emptyset)=S_n^q(\emptyset)=1$.

Congruences for ordinary single and multiple harmonic sums have been studied since the  nineteenth century
(see \cite{G900, HPT14, Ho04, Le38, W862, Zh08} and references therein). It is well known that many multiple harmonic sums modulo a prime $p$ (or a power of $p$)
can be expressed in terms of Bernoulli numbers. The situation with $q$-analogs is much less known.
The first $q$-congruences for $q$-analogs of harmonic numbers, namely the $q$-analogs of Wolstenholme's theorem, were obtained by Andrews \cite{An99}
who proved that
for all primes $p\ge 3,$
$$
H_{p-1}^q(1;0)=\sum_{j=1}^{p-1}\frac{1}{[j]_q}\equiv \frac{p-1}{2} (1-q) \pmod{[p]_q},
$$
as well as
\begin{equation} \label{qcon-3}
H_{p-1}^q(1;1)=\sum_{j=1}^{p-1}\frac{q^j}{[j]_q}\equiv -\frac{p-1}{2}(1-q) \pmod{[p]_q}.
\end{equation}
Then Shi and Pan \cite{SP07} proved that for primes $p\ge 5$,
\begin{equation*}
\begin{split}
H_{p-1}^q(2;0)&\equiv -\frac{(p-1)(p-5)}{12} (1-q)^2 \pmod{[p]_q}, \\[3pt]
H_{p-1}^q(2;1)&\equiv -\frac{p^2-1}{12} (1-q)^2 \pmod{[p]_q}.
\end{split}
\end{equation*}
Here the above congruences are considered over the ring of polynomials with integer coefficients in variable $q.$
It is clear that $[p]_q$, as the $p$th cyclotomic polynomial, is irreducible over $\mathbb{Q}$ and therefore, the denominators of the rational functions above are coprime to $[p]_q.$

Dilcher \cite{Di08} was the first who noticed that the higher order $q$-harmonic numbers can be expressed modulo $[p]_q$ in terms of the degenerate Bernoulli numbers,
namely in terms of the sequence $K_n(p)$ defined by the generating function
\begin{equation} \label{qcon-1}
\frac{p(z-1)}{z^p-1}=\sum_{n=0}^{\infty}(-1)^{n-1}K_n(p)(z-1)^n \qquad\quad\mbox{for $|z-1|<1$.}
\end{equation}
He proved (see \cite{Di08} and also \cite{Zh13}) that for all positive integers $n$ and primes $p\ge 3,$
\begin{equation} \label{qcon-4}
H_{p-1}^q(n;1)\equiv K_n(p) (1-q)^n \pmod{[p]_q}
\end{equation}
and
\begin{equation}\label{qcon-4a}
H_{p-1}^q(n;0)\equiv (1-q)^n\Bigl(\frac{p-1}{2}+\sum_{m=2}^nK_m(p)\Bigr) \pmod{[p]_q}.
\end{equation}
The congruences above easily imply (see \cite[Theorem 1.2]{Zh13} and \cite[Theorem 2.1]{Ta13} for a similar result) that for all primes $p>3$ and integers $n\ge t\ge 1$,
\begin{equation} \label{HS}
H_{p-1}^q(n;t)\equiv (1-q)^n\sum_{m=0}^{t-1}\binom{t-1}{m}(-1)^mK_{n-m}(p) \pmod{[p]_q}.
\end{equation}
These results were further extended by Zhao \cite{Zh13} who studied generating functions of multiple $q$-harmonic sums with repeated arguments $H_{p-1}^q(\{s\}^t; \{0\}^t)$, where $\{s\}^t$ denotes $t\ge 0$ consecutive copies of the letter $s$.
In particular, he showed \cite[Corollaries 2.2, 2.3]{Zh13} that
\begin{equation} \label{111}
H_{p-1}^q(\{1\}^t)=H_{p-1}^q(\{1\}^t;\{0\}^t)\equiv \binom{p-1}{t}\frac{(1-q)^t}{t+1} \pmod{[p]_q}
\end{equation}
and
\begin{equation} \label{222}
H_{p-1}^q(\{2\}^t;\{0\}^t)\equiv (-1)^t\frac{2\cdot t!}{(2t+2)!}\binom{p-1}{t}\cdot F_{2,t}(p)\cdot(1-q)^{2t} \pmod{[p]_q},
\end{equation}
where $F_{2,t}(p)$ is a monic polynomial in $p$ of degree $t$ and $p>t$.

In this paper, we prove some new $q$-congruences modulo
$[p]_q$ for further classes of multiple $q$-harmonic sums of arbitrary depth with strings of indices composed of ones, twos and threes.
Some of these results are $q$-extensions of the corresponding congruences for ordinary multiple harmonic sums obtained by the authors in \cite{HPT14}.

We will refer to multiple $q$-harmonic sums on strings $(s,s,\ldots,s)$ as {\sl homogeneous} sums. If the vector $(s,s,\ldots,s)$
is modified by insertion of an element $u$,  we say that $H_n^q(\{s\}^a,u,\{s\}^b)$ (correspondingly, $S_n^q(\{s\}^a,u,\{s\}^b)$)
is a {\sl quasi-homogeneous} multiple $q$-harmonic (non-strict) sum.
We may summarize our results as follows.

\noindent Homogeneous sums: for any integer $t\ge 1$ and any prime $p>t$ we have modulo $[p]_q$,
\begin{align*}
S_{p-1}^q(\{1\}^t)&\equiv -K_t(p)(1-q)^t,\\
H_{p-1}^q(\{2\}^t)&\equiv (-1)^t\binom{p+t}{2t+1} \frac{(1-q)^{2t}}{p(t+1)},\\
S^q_{p-1}(\{2\}^t)&\equiv
(1-q)^{2t}\sum_{i=0}^t
\left(t\binom{t-1}{i}+(t-1)\binom{t}{i}\right)
(-1)^{i}K_{2t-i}(p), \\
H_{p-1}^q(\{3\}^t)&\equiv
\frac{(1-q)^{3t}}{(t+1)p^2}
\left(\binom{p+2t+1}{3t+2}+\binom{-p+2t+1}{3t+2}\right).
 \end{align*}
Quasi-homogeneous sums: for any non-negative integers $a,b$ and any prime $p>2t+1$ with  $t=a+b+1$, we have modulo $[p]_q$,
\begin{align*}
H^q_{p-1}(\{1\}^{a},2,\{1\}^{b})
+H^q_{p-1}(\{1\}^{b},2,\{1\}^{a})&\equiv
-\binom{p+1}{t+2}\frac{(1-q)^{t+1}}{p},\\
S_{p-1}^q(\{1\}^{a},2,\{1\}^{b})+S_{p-1}^q(\{1\}^{b},2,\{1\}^{a})
&\equiv (1-q)^{t+1}(K_{t}(p)-K_{t+1}(p)-K_{a+1}(p)K_{b+1}(p)),\\
H_{p-1}^q(\{2\}^a,3,\{2\}^b)+H_{p-1}^q(\{2\}^b,3,\{2\}^a)
&\equiv
(-1)^{t-1}\binom{p+t}{2t+1}\frac{(1-q)^{2t+1}}{p(t+1)},\\
S_{p-1}^q(\{2\}^a,3,\{2\}^b)+S_{p-1}^q(\{2\}^b,3,\{2\}^a)
&\\
\equiv(1-q)^{2t+1}\sum_{i=0}^{t}
&\left(t\binom{t-1}{i}+(t-1)\binom{t}{i}\right)
(-1)^{i+1}K_{2t-i}(p).
\end{align*}
Our derivations for homogeneous strict sums are based on application of generating functions and properties of cyclotomic and Chebyshev polynomials,
while the congruences for non-strict sums follow from some $q$-identities proved in \cite{HP13, HPZ13}.

We prove $q$-duality relations for multiple $q$-harmonic non-strict sums
by employing  $q$-binomial duality relations found by Bradley \cite{Br05}.
We also establish a kind of duality for multiple $q$-harmonic strict sums by proving some new $q$-identities which are generalizations of the divisor generating functions identities due to Dilcher \cite{Di95} and Prodinger \cite{Pr00}.

\noindent Finally, we put forward the following conjecture concerning cyclic sums of some multiple $q$-harmonic sums.
\begin{conjecture}[Cyclic-sum]
Let $d_0,d_1,\dots,d_{t}$ be non-negative integers. Then

\begin{itemize}

\item[i)] For any prime $p>r+1$, where $r=\sum_{i=0}^{t}d_i+2t$,
\begin{equation}\label{CG12}
\sum_{i=0}^{t}H^q_{p-1}\left(
\{1\}^{d_i},2,\{1\}^{d_{i+1}},2,\dots,2,\{1\}^{d_{i+t}}
\right)\equiv
(-1)^t\binom{p+t}{r+1}\frac{(1-q)^{r}}{p}
\pmod{\qq{p}},
\end{equation}

\item[ii)] For any prime $p>r+1$, where $r=\sum_{i=0}^{t}2d_i+3t$,
\begin{equation}\label{CG23}
\sum_{i=0}^{t}H^q_{p-1}\left(
\{2\}^{d_i},3,\{2\}^{d_{i+1}},3,\dots,3,\{2\}^{d_{i+t}}
\right)\equiv
N(p,r,t)\frac{(1-q)^{r}}{p}
\pmod{\qq{p}},
\end{equation}
where $N(p,r,t)$ does not depend on $q$.
\end{itemize}
In both congruences it is understood that $d_i=d_j$ if $i\equiv j$ modulo $t+1$.
\end{conjecture}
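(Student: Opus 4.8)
The plan is to prove the two congruences by first lifting them to \emph{exact} $q$-identities over $\mathbb{Z}[q]$ and only at the very end reducing modulo $\qq{p}$. The two extreme cases of the conjecture are already available and should serve as anchors: setting every $d_i=0$ collapses the left-hand side of \eqref{CG12} to $(t+1)\,H^q_{p-1}(\{2\}^t)$, and the stated right-hand side is then exactly $(t+1)$ times the homogeneous evaluation $H^q_{p-1}(\{2\}^t)\equiv(-1)^t\binom{p+t}{2t+1}\frac{(1-q)^{2t}}{p(t+1)}$; while the case $t=1$ reduces \eqref{CG12} to $H^q_{p-1}(\{1\}^{d_0},2,\{1\}^{d_1})+H^q_{p-1}(\{1\}^{d_1},2,\{1\}^{d_0})$, which is precisely the quasi-homogeneous $(1,2,1)$ congruence (and analogously \eqref{CG23} with $\{2\}^t$ and the $(2,3,2)$ congruence). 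This strongly suggests that the cyclic sum interpolates between these two known endpoints and is governed by a single closed form depending only on the weight $r$ and the number $t$ of inserted letters, and \emph{not} on the individual block lengths $d_i$.

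First I would introduce the multivariate generating function $\sum_{d_0,\dots,d_t\ge0}C_{\bf d}\,x_0^{d_0}\cdots x_t^{d_t}$, where $C_{\bf d}$ denotes the cyclic sum on the left of \eqref{CG12}. Since each summand is a multiple $q$-harmonic sum on a string of $1$'s punctuated by $2$'s, it satisfies the usual boundary recurrence $H^q_n=H^q_{n-1}+\frac{q^{nt_l}}{\qq{n}^{\,s_l}}\,(\text{lower-depth part})$ in the truncation parameter $n$; iterating this recurrence expresses the generating function of a single string as a product of $q$-binomial factors. The cyclic invariance of $C_{\bf d}$ then lets me symmetrize over the $t+1$ rotations, turning this product into a symmetric function in the $x_i$. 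I would then try to collapse that symmetric function, via a $q$-analog of Newton's identities together with the strict-sum $q$-identities generalizing those of Dilcher \cite{Di95} and Prodinger \cite{Pr00} established earlier in the paper, into a single power-sum-type term whose coefficient is the $q$-binomial appearing on the right. The final reduction modulo $\qq{p}$ would follow from the homogeneous evaluations of $H^q_{p-1}(\{2\}^t)$ and $H^q_{p-1}(\{3\}^t)$, with the coefficient $\binom{p+t}{r+1}$ emerging as the interpolation of $\binom{p+t}{2t+1}$ in the weight parameter $r$.

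The main obstacle is the symmetrization for $t\ge2$: with more than two rotations the generating function no longer factors as cleanly as in the single-insertion case, and I expect one needs a genuinely new multivariate $q$-identity to prove the crucial fact that $C_{\bf d}$ depends on ${\bf d}$ only through $r=\sum_i d_i+2t$. This $d_i$-independence is the real combinatorial content of the conjecture, and a naive induction on the individual $d_i$ is likely to stall precisely here. The $2$--$3$ case \eqref{CG23} is harder still: its right-hand side is not even given in closed binomial form, since the factor $N(p,r,t)$ is only asserted to be free of $q$. I would therefore expect the Chebyshev and cyclotomic-polynomial machinery used for the homogeneous $\{3\}^t$ sums to be unavoidable, and the genuine difficulty is fusing that machinery with the cyclic symmetrization above — which is what keeps these two statements at the level of a conjecture rather than a theorem.
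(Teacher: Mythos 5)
You have not proved this statement, and neither does the paper: the Cyclic-sum Conjecture is posed in the paper as an open problem, with exactly the two consistency checks you perform offered as evidence. Your verification of the boundary cases is correct and coincides with the paper's remark --- the case $t=1$ of \eqref{CG12} and \eqref{CG23} is Theorems \ref{T6.2} and \ref{Tx.x}, and the case of all $d_i=0$ follows from Theorems \ref{T3.1} and \ref{T33.3} --- but everything beyond that in your write-up is a plan rather than an argument, and you concede as much in your final paragraph.

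The gaps are concrete. First, the claimed factorization is unsubstantiated: iterating the boundary recurrence on a string $(\{1\}^{d_0},2,\{1\}^{d_1},\dots,2,\{1\}^{d_t})$ does not produce a product of $q$-binomial factors; summing over the block sizes $d_i$ with weights $x_i^{d_i}$ yields a sum over the positions $k_1<\dots<k_t$ of the inserted $2$'s of terms $\prod_j \frac{q^{k_j}}{\qq{k_j}^2}$ times partial products $\prod_{k}(1+x_j/\qq{k})$ over the intervening ranges --- a transfer-matrix expression, not a product --- and no $q$-analog of Newton's identities is exhibited (or known) that collapses its symmetrization over rotations to a single closed form. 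Second, the mechanism that actually proves the $t=1$ cases in the paper does not extend: Theorems \ref{T6.2} and \ref{Tx.x} rest on pairing a string with its \emph{reversal} via the duality \eqref{EE1} (equivalently Theorem \ref{T6.1}), and reversal sends the rotation $(d_i,d_{i+1},\dots,d_{i+t})$ to the reflected sequence $(d_{i+t},\dots,d_{i+1},d_i)$; for $t\ge 2$ the set of rotations of ${\bf d}$ is not stable under reflection unless the necklace is palindromic, so duality relates the cyclic sum for ${\bf d}$ to the cyclic sum for the reversed necklace $\overline{\bf d}$ rather than to itself, and the cancellation available at $t=1$ disappears. Third, the assertion that the cyclic sum depends on ${\bf d}$ only through $r$ and $t$ --- which you correctly identify as ``the real combinatorial content'' --- \emph{is} the conjecture; postulating that a ``genuinely new multivariate $q$-identity'' should deliver it is not a proof step. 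In short, your proposal reproduces the evidence the authors already cite and leaves the statement exactly as open as it is in the paper.
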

Note that the case $t=1$ is exactly Theorem \ref{T6.2} and Theorem \ref{Tx.x} below. The case of arbitrary $t$ when all $d_j$ are zero follows from Theorem \ref{T3.1} and Theorem \ref{T33.3}.

\section{Preliminary results about $K_n(p)$}

We start with recalling several known  results on the sequence $K_n(p)$ defined by (\ref{qcon-1}).
The first few values of $K_n(p)$ are as follows:
\begin{equation*}
\begin{split}
K_0(p)&=-1, \,\, K_1(p)=-\frac{p-1}{2}, \,\, K_2(p)=-\frac{p^2-1}{12}, \,\, K_3(p)=-\frac{p^2-1}{24}, \\[3pt]
K_4(p)&=\frac{(p^2-1)(p^2-19)}{720}, \,\,\,\,\, K_5(p)=\frac{(p^2-1)(p^2-9)}{480}, \,\,\ldots.
\end{split}
\end{equation*}
The properties of this sequence were studied in detail by Dilcher in \cite{Di08} (just with another notation $\widetilde{D}_n(p)=-p^nK_n(p)$).
In particular, it was shown that for $n\ge 2,$ $K_n(p)$ is a polynomial in $p^2$ of degree at most $[n/2]$ which is divisible by $p^2-1.$
The sequence $K_n(p)$ is very closely related to the degenerate Bernoulli numbers (in fact, polynomials) that were first studied by Carlitz \cite{Ca56},
and can be defined from the expansion
\begin{equation} \label{qcon-2}
\frac{x}{(1+\lambda x)^{1/\lambda}-1}=\sum_{n=0}^{\infty}\beta_n(\lambda)\frac{x^n}{n!}.
\end{equation}
By comparing generating functions (\ref{qcon-1}) and (\ref{qcon-2}) we have the relation (see \cite[Theorem 3]{Di08})
\begin{equation} \label{knp}
K_n(p)=\frac{(-1)^{n-1}}{n!} p^n \beta_n\left(\frac{1}{p}\right).
\end{equation}
Howard \cite{Ho} found explicit formulas for the coefficients of $\beta_n(\lambda)$ which together with (\ref{knp}) imply that for $n\ge 2,$
$$
K_n(p)=\frac{(-1)^{n-1}}{(n-1)!}\left(\frac{b_n}{n}
+\sum_{j=1}^{\lfloor n/2\rfloor}\frac{B_{2j}}{2j}\,s(n-1,2j-1)p^{2j}\right),
$$
where $b_n$ is the Bernoulli number of the second kind, $B_{2j}$ is the ordinary Bernoulli number, and $s(n,j)$ is the Stirling number of the first kind,
which are defined by the corresponding generating functions:
$$\frac{x}{\ln(1+x)}=\sum_{n=0}^{\infty}\frac{b_nx^n}{n!},\qquad
\frac{x}{e^x-1}=\sum_{n=0}^{\infty}\frac{B_nx^n}{n!},\qquad
\prod_{j=0}^{n-1}(x-j)=\sum_{j=0}^{n}s(n,j)x^j.
$$
From the other side, the ordinary Bernoulli numbers $B_n$
are the limit case of the degenerate Bernoulli numbers:
$$
\lim_{\lambda\to 0} \beta_n(\lambda)=B_n.
$$

\section{Preliminary results about $q$-MHS}

In this section, we describe some basic relations between multiple $q$-harmonic sums modulo $[p]_q$, which will be useful in the sequel.
These relations arise from inverting the order of summation of a nested sum and from expanding the product of two multiple $q$-harmonic sums.

\noindent Let $\overline{\bf s}=(s_l, s_{l-1}, \ldots, s_1)$ denote the reversal of ${\bf s}=(s_1, \ldots, s_{l-1}, s_l).$
Then we have the following relations.
\begin{theorem} \label{T3}
Let $p$ be a prime and ${\bf s}=(s_1,\ldots, s_l),$ ${\bf t}=(t_1,\ldots, t_l)$ be two $l$-tuples of non-negative integers. Then
\begin{align*}
H_{p-1}^q({\bf s}; {\bf t})&\equiv (-1)^{w({\bf s})}H_{p-1}^q(\overline{\bf s}; \overline{{\bf s}}-\overline{{\bf t}}) \pmod{[p]_q},\\
S_{p-1}^q({\bf s}; {\bf t})&\equiv (-1)^{w({\bf s})}S_{p-1}^q(\overline{\bf s}; \overline{{\bf s}}-\overline{{\bf t}}) \pmod{[p]_q}.
\end{align*}
In particular,
\begin{align}
H_{p-1}^q({\bf s})&\equiv (-1)^{w({\bf s})}H_{p-1}^q(\overline{\bf s}; \{1\}^l) \pmod{[p]_q},\label{EE1}\\
S_{p-1}^q({\bf s})&\equiv (-1)^{w({\bf s})}S_{p-1}^q(\overline{\bf s}; \{1\}^l) \pmod{[p]_q}.\label{EE2}
\end{align}
\end{theorem}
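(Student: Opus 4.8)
The plan is to prove the duality relations by reversing the order of summation, exploiting the symmetry $k \mapsto p-k$ on the index set $\{1, \ldots, p-1\}$.  First I would recall the single-variable $q$-congruence that underlies everything: for $1 \le k \le p-1$ one has $[p-k]_q = \frac{1-q^{p-k}}{1-q}$, and since $q^p \equiv 1 \pmod{[p]_q}$, a short computation gives $[p-k]_q \equiv -q^{-k}[k]_q \pmod{[p]_q}$.  Hence for any exponent $s \ge 1$,
\begin{equation*}
\frac{1}{[p-k]_q^{s}} \equiv \frac{(-1)^{s} q^{sk}}{[k]_q^{s}} \pmod{[p]_q}.
\end{equation*}

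Next I would apply the substitution $k_j \mapsto p - k_{l+1-j}$ to the defining sum for $H_{p-1}^q({\bf s};{\bf t})$.  This substitution reverses the chain of inequalities $1 \le k_1 < \cdots < k_l \le p-1$ into $1 \le k_1' < \cdots < k_l' \le p-1$ (writing $k_j' = p - k_{l+1-j}$), so the index set is preserved.  In the summand, the denominator $[k_j]_q^{s_j}$ becomes $[p-k_{l+1-j}']_q^{s_j} \equiv (-1)^{s_j} q^{s_j k'}/[k']_q^{s_j}$, contributing an overall sign $(-1)^{w({\bf s})}$ and turning the weight vector ${\bf s}$ into its reversal $\overline{\bf s}$.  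The numerator factor $q^{k_j t_j}$ becomes $q^{(p-k')t_{l+1-j}} \equiv q^{-k' t_{l+1-j}}$.  Collecting the $q$-exponents attached to the new variable $k'$ (with reversed index $\overline{\bf s}$, $\overline{\bf t}$) one gets $q^{k'(s - t)}$ for the corresponding reversed components, which is exactly the exponent vector $\overline{\bf s} - \overline{\bf t}$.  This yields the claimed congruence for $H$, and the identical argument with the non-strict chain $1 \le k_1 \le \cdots \le k_l \le p-1$ handles $S$.

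The special cases \eqref{EE1} and \eqref{EE2} then follow by specialization: $H_{p-1}^q({\bf s})$ is by definition $H_{p-1}^q({\bf s};{\bf s}-{\bf 1})$ where ${\bf 1} = (1,\ldots,1)$, so $\overline{\bf s} - \overline{\bf t} = \overline{\bf s} - (\overline{\bf s} - \overline{\bf 1}) = \overline{\bf 1} = \{1\}^l$, giving $H_{p-1}^q(\overline{\bf s}; \{1\}^l)$ directly.

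The step I expect to require the most care is the bookkeeping of the $q$-exponents: one must verify that the powers of $q$ arising from both the numerator $q^{k_j t_j}$ and from each application of the single-variable reduction $[p-k]_q^{-s} \equiv (-1)^s q^{sk} [k]_q^{-s}$ combine, after the reindexing $j \mapsto l+1-j$, into precisely the target exponent vector $\overline{\bf s} - \overline{\bf t}$ rather than something off by a sign or a shift.  This is purely mechanical but is where an index or sign error would creep in, so I would track the exponent of each new variable $k'_i$ explicitly and confirm it equals $s_{i} - t_{i}$ after reversal.  Everything else is the standard $q$-analog of the classical harmonic-sum duality and presents no conceptual obstacle.
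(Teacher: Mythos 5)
Your proposal is correct and follows essentially the same route as the paper: both arguments reverse the summation via $k\mapsto p-k$, use $q^p\equiv 1\pmod{[p]_q}$ to reduce $[p-k]_q\equiv -q^{-k}[k]_q$, collect the sign $(-1)^{w({\bf s})}$, and verify that the $q$-exponents combine into $\overline{\bf s}-\overline{\bf t}$. The only cosmetic difference is that you build the relabeling $j\mapsto l+1-j$ into the substitution and isolate the single-variable reduction as a lemma, while the paper substitutes first (obtaining a decreasing chain) and manipulates the factors $(q^{k}-q^{p})^{s}$ explicitly before relabeling.
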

\begin{proof}
Reversing the order of summation, we get
\begin{equation*}
\begin{split}
H_{p-1}^q({\bf s}; {\bf t})&=\sum_{1\le k_1<\cdots<k_l<p}\frac{q^{t_1k_1+\cdots+t_lk_l}}{[k_1]_q^{s_1}\cdots [k_l]_q^{s_l}}
=\sum_{p>k_1>\cdots>k_l\ge 1}\frac{q^{t_1(p-k_1)+\cdots+t_l(p-k_l)}}{[p-k_1]_q^{s_1}\cdots[p-k_l]_q^{s_l}}\\
&=(1-q)^{w({\bf s})}\sum_{1\le k_l<\cdots<k_1<p}\frac{q^{t_1(p-k_1)+\cdots+t_l(p-k_l)+k_1s_1+\cdots+k_ls_l}}{(q^{k_1}-q^p)^{s_1}\cdots (q^{k_l}-q^p)^{s_l}} \\
&\equiv (-1)^{w({\bf s})}(1-q)^{w({\bf s})}\sum_{1\le k_l<\cdots<k_1<p}\frac{q^{k_1(s_1-t_1)+\cdots+k_l(s_l-t_l)}}{(1-q^{k_1})^{s_1}\cdots (1-q^{k_l})^{s_l}}\\
&=(-1)^{w({\bf s})}H_{p-1}^q(\overline{\bf s}; \overline{{\bf s}}-\overline{{\bf t}}) \pmod{[p]_q}.
\end{split}
\end{equation*}
Putting ${\bf t}=(s_1-1,\ldots,s_l-1),$ we get (\ref{EE1}).
The proofs for the non-strict sums are similar.
\end{proof}

For depth one $q$-harmonic sums $H_{p-1}^q(s),$ we have the following.
\begin{corollary} \label{T2}
If $p\ge 3$ is a prime, then for all positive integers  $s$, we have
\begin{equation*}
H_{p-1}^q(s)\equiv (-1)^s K_s(p) (1-q)^s \pmod{[p]_q}.
\end{equation*}
\end{corollary}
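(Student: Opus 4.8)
The plan is to read this off as the depth-one special case of the duality relation (\ref{EE1}) combined with Dilcher's congruence (\ref{qcon-4}), so that essentially no new computation is required. First I would unwind the definition: for the single index ${\bf s}=(s)$ one has $l=1$, weight $w({\bf s})=s$, reversal $\overline{\bf s}=(s)$, and $H_{p-1}^q(s)=H_{p-1}^q(s;s-1)=\sum_{k=1}^{p-1}q^{k(s-1)}/[k]_q^s$ by the definition of the abbreviated sum $H_n^q({\bf s})$.

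Next I would apply Theorem~\ref{T3} in the form (\ref{EE1}) with this one-tuple. Since $\{1\}^l=(1)$ and $\overline{\bf s}=(s)$, the relation specializes to
\[
H_{p-1}^q(s)\equiv (-1)^{s}\,H_{p-1}^q(s;1)\pmod{[p]_q}.
\]
This is the only place where the duality theorem is invoked, and it simply converts the $q$-power $q^{k(s-1)}$ in the numerator into $q^{k}$ at the cost of the sign $(-1)^s$, exactly matching the sign that appears in the target formula.

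To finish I would substitute Dilcher's congruence (\ref{qcon-4}), which states $H_{p-1}^q(n;1)\equiv K_n(p)(1-q)^n \pmod{[p]_q}$ for every positive integer $n$ and prime $p\ge 3$. Taking $n=s$ and inserting this into the previous display gives
\[
H_{p-1}^q(s)\equiv (-1)^{s}K_s(p)(1-q)^{s}\pmod{[p]_q},
\]
which is precisely the asserted congruence. Since both (\ref{EE1}) and (\ref{qcon-4}) are already available and valid for all $p\ge 3$, the hypotheses of the corollary are met and the argument is complete.

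I do not expect a genuine obstacle here: the statement is a direct corollary, and the only care needed is the bookkeeping of the depth-one specialization (confirming $\overline{\bf s}=(s)$ and that the target index $\{1\}^l$ collapses to the single entry $1$). Should one prefer an independent derivation not relying on Theorem~\ref{T3}, an alternative is to reverse the order of summation directly in $\sum_{k=1}^{p-1}q^{k(s-1)}/[k]_q^s$ as in the proof of Theorem~\ref{T3} and then appeal to (\ref{qcon-4}); but routing through (\ref{EE1}) is the cleanest path and keeps the proof to a single line.
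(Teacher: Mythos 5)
Your proposal is correct and is exactly the paper's own proof: the paper likewise sets $l=1$ in (\ref{EE1}) to get $H_{p-1}^q(s)\equiv(-1)^sH_{p-1}^q(s;1)\pmod{[p]_q}$ and then applies Dilcher's congruence (\ref{qcon-4}). Your bookkeeping of the depth-one specialization is accurate, so nothing further is needed.
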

\begin{proof}
Setting $l=1$ in (\ref{EE1}) and applying (\ref{qcon-4}), we get the desired congruence.
\end{proof}

It is easy to show that the product of two multiple harmonic sums $H_n^q$ (resp.~$S_n^q$) can be expressed as a linear combination of $H_n^q$
(resp.~$S_n^q$). For example,
\begin{align*}
H_n^q(s_1)H_n^q(s_2)&=H_n^q(s_1,s_2)+H_n^q(s_2,s_1)+ H_n^q(s_1+s_2)+(1-q)H_n^q(s_1+s_2-1), \\[3pt]
S_n^q(s_1)S_n^q(s_2)&=S_n^q(s_1,s_2)+S_n^q(s_2,s_1)-S_n^q(s_1+s_2)-(1-q)S_n^q(s_1+s_2-1).
\end{align*}
From the above relations and Corollary \ref{T2} we obtain the following congruences.
\begin{lemma} \label{L1}
Let $p$ be a prime and $a, b$ be positive integers. Then we have modulo $[p]_q$,
\begin{align*}
H_{p-1}^q(a,b)+H_{p-1}^q(b,a)&\equiv (-1)^{a+b}(1-q)^{a+b}(K_a(p)K_b(p)-K_{a+b}(p)+K_{a+b-1}(p)),  \\[3pt]
S_{p-1}^q(a,b)+S_{p-1}^q(b,a)&\equiv (-1)^{a+b}(1-q)^{a+b}(K_a(p)K_b(p)+K_{a+b}(p)-K_{a+b-1}(p)).
\end{align*}
\end{lemma}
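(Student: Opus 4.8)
The plan is to derive both congruences from the two stuffle (quasi-shuffle) product identities displayed immediately before the statement, combined with the depth-one evaluation in Corollary \ref{T2}. I would first treat the strict case. Applying Corollary \ref{T2} to each factor, I have $H_{p-1}^q(a)\equiv(-1)^aK_a(p)(1-q)^a$ and $H_{p-1}^q(b)\equiv(-1)^bK_b(p)(1-q)^b$ modulo $\qq{p}$, so the left-hand side of the product identity,
\[
H_{p-1}^q(a)H_{p-1}^q(b)\equiv(-1)^{a+b}(1-q)^{a+b}K_a(p)K_b(p)\pmod{\qq{p}}.
\]
On the right-hand side the two depth-one \emph{diagonal} terms are $H_{p-1}^q(a+b)$ and $(1-q)H_{p-1}^q(a+b-1)$, which by Corollary \ref{T2} become $(-1)^{a+b}K_{a+b}(p)(1-q)^{a+b}$ and $(1-q)\cdot(-1)^{a+b-1}K_{a+b-1}(p)(1-q)^{a+b-1}=(-1)^{a+b-1}K_{a+b-1}(p)(1-q)^{a+b}$ respectively. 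Subtracting these from both sides isolates the symmetric depth-two combination $H_{p-1}^q(a,b)+H_{p-1}^q(b,a)$, and after factoring out $(-1)^{a+b}(1-q)^{a+b}$ the bracket reads $K_a(p)K_b(p)-K_{a+b}(p)+K_{a+b-1}(p)$, exactly as claimed.

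For the non-strict case I would run the identical computation with the second product identity. The only structural change is that the two diagonal terms now enter with a minus sign: the right-hand side is $S_{p-1}^q(a,b)+S_{p-1}^q(b,a)-S_{p-1}^q(a+b)-(1-q)S_{p-1}^q(a+b-1)$. Since the depth-one sums $S_{p-1}^q(s)$ and $H_{p-1}^q(s)$ coincide for a single index, Corollary \ref{T2} applies verbatim to the $S$-terms as well. Moving the two diagonal contributions to the left-hand side therefore flips their signs relative to the strict case, turning $-K_{a+b}(p)+K_{a+b-1}(p)$ into $+K_{a+b}(p)-K_{a+b-1}(p)$, which is precisely the sign pattern recorded in the second displayed congruence. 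Thus both parts follow in parallel once the sign bookkeeping of the diagonal terms is handled correctly.

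There is genuinely no hard analytic step here: the entire proof is a substitution of Corollary \ref{T2} into the two stuffle identities, followed by collecting the depth-two terms. The one place demanding care—and the step I would flag as the main (minor) obstacle—is the sign and $(1-q)$-weight bookkeeping on the weight-$(a+b-1)$ correction term, where the explicit factor $(1-q)$ in the product identity combines with the $(1-q)^{a+b-1}$ coming from $H_{p-1}^q(a+b-1)$ and with the parity flip $(-1)^{a+b-1}$ from the same corollary; getting this to align cleanly with the overall $(-1)^{a+b}(1-q)^{a+b}$ prefactor is the only thing one must verify with attention. I would present the strict computation in full and then remark that the non-strict case is identical up to the two sign changes, rather than repeating it.
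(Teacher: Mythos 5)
Your proof is correct and follows exactly the paper's own route: the paper derives Lemma \ref{L1} precisely by substituting Corollary \ref{T2} into the two displayed stuffle identities, just as you do, and your sign bookkeeping (including the observation that depth-one $S$ and $H$ sums coincide) checks out.
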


\section{Homogeneous sums $H_{p-1}^q(\{2\}^t)$ and $S_{p-1}^q(\{2\}^t)$}

In this section, we prove congruences for homogeneous strict and non-strict sums on strings composed of twos.
\begin{theorem} \label{T3.1}
For any integer $t\ge 1$ and any prime $p>t$, we have
\begin{equation}\label{CC2}
H_{p-1}^q(\{2\}^t)\equiv (-1)^t\binom{p+t}{2t+1} \frac{(1-q)^{2t}}{p(t+1)} \pmod{[p]_q}.
\end{equation}
\end{theorem}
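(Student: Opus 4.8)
The plan is to determine the full generating function $\sum_t H_{p-1}^q(\{2\}^t)z^t$ modulo $[p]_q$ in closed form and then read off the coefficient of $z^t$. First observe that $H_{p-1}^q(\{2\}^t)=H_{p-1}^q(\{2\}^t;\{1\}^t)$ is the $t$-th elementary symmetric function of the $p-1$ quantities $x_k:=q^k/[k]_q^2$, so
\[
\sum_{t\ge 0}H_{p-1}^q(\{2\}^t)\,z^t=\prod_{k=1}^{p-1}\Bigl(1+\tfrac{q^k}{[k]_q^2}z\Bigr).
\]
Clearing denominators with $[k]_q=(1-q^k)/(1-q)$, the $k$-th factor equals $\bigl((1-q^k)^2+q^k(1-q)^2z\bigr)/(1-q^k)^2$, and its numerator is the quadratic $q^{2k}-(2-(1-q)^2z)q^k+1=(q^k-\alpha)(q^k-\alpha^{-1})$ in $q^k$, where $\alpha$ is the branch near $1$ of $\alpha+\alpha^{-1}=2-(1-q)^2z$.

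Next I pass to the residue ring modulo $[p]_q$, i.e.\ I treat $q$ as a primitive $p$-th root of unity, so that $1,q,\dots,q^{p-1}$ run over all $p$-th roots of unity. Then $\prod_{k=0}^{p-1}(X-q^k)\equiv X^p-1$, whence $\prod_{k=1}^{p-1}(X-q^k)\equiv(X^p-1)/(X-1)$ and $\prod_{k=1}^{p-1}(1-q^k)\equiv p$. Feeding $X=\alpha$ and $X=\alpha^{-1}$ into this (and using that $p$ is odd) collapses the whole product to
\[
\prod_{k=1}^{p-1}\Bigl(1+\tfrac{q^k}{[k]_q^2}z\Bigr)\equiv\frac{1}{p^2}\,\frac{\alpha(\alpha^p-1)^2}{\alpha^p(\alpha-1)^2}=\frac{1}{p^2}\,U_{p-1}(\beta)^2\pmod{[p]_q},
\]
where $U_{p-1}$ is the Chebyshev polynomial of the second kind and $\beta^2=1-(1-q)^2z/4$ (writing $\alpha=e^{i\theta}$ and $\beta=\cos(\theta/2)$, one has $(\alpha^{p/2}-\alpha^{-p/2})/(\alpha^{1/2}-\alpha^{-1/2})=\sin(p\theta/2)/\sin(\theta/2)=U_{p-1}(\beta)$). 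Since $p-1$ is even, $U_{p-1}(\beta)$ is a polynomial in $\beta^2$, so the right-hand side is a bona fide polynomial in $z$.

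It remains to extract $[z^t]$ of $p^{-2}U_{p-1}(\beta)^2$ and identify it with $(-1)^t\binom{p+t}{2t+1}(1-q)^{2t}/(p(t+1))$; this coefficient comparison is the heart of the matter. I would finish within Chebyshev theory: from $\sin^2(p\psi)/\sin^2\psi=\sum_{k=0}^{p-1}\sin((2k+1)\psi)/\sin\psi$ one gets $U_{p-1}^2=\sum_{k=0}^{p-1}U_{2k}$, and expanding each $U_{2k}$ in powers of $\sin^2\psi=(1-q)^2z/4$ via $U_{2k}(\cos\psi)=(2k+1)\,{}_2F_1(-k,k+1;\tfrac32;\sin^2\psi)$ gives $[(\sin^2\psi)^t]\,U_{2k}=(-1)^t4^t\frac{2k+1}{2t+1}\binom{k+t}{2t}$. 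Summing over $k$ reduces everything to the weighted hockey-stick identity $\sum_{k=0}^{p-1}(2k+1)\binom{k+t}{2t}=\frac{(2t+1)p}{t+1}\binom{p+t}{2t+1}$, which follows from $(2k+1)\binom{k+t}{2t}=(2t+1)\bigl(\binom{k+t+1}{2t+1}+\binom{k+t}{2t+1}\bigr)$ and two telescoping sums; multiplying the resulting coefficient by $((1-q)^2/4)^t$ cancels the $4^t$ and yields the claim. Equivalently, since $\tfrac{1-p}{2}+\tfrac{1+p}{2}+\tfrac12=\tfrac32$, Clausen's formula gives $p^{-2}U_{p-1}(\beta)^2={}_3F_2(1-p,1+p,1;2,\tfrac32;\sin^2\psi)$, whose $t$-th Pochhammer coefficient is directly $(-1)^t4^t\binom{p+t}{2t+1}/(p(t+1))$. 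The main obstacle is precisely this final step, teasing the clean binomial $\binom{p+t}{2t+1}/(p(t+1))$ out of the square of the Dirichlet-type kernel; the generating-function setup and the cyclotomic collapse are routine once arranged, and the degenerate case $p=2$ is a one-line check.
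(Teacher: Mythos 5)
Your proposal is correct, and every identity it rests on checks out: the factorization of $1+q^kz/[k]_q^2$ via the roots $\alpha,\alpha^{-1}$ with $\alpha+\alpha^{-1}=2-(1-q)^2z$, the collapse to $p^{-2}\alpha(\alpha^p-1)^2/\bigl(\alpha^p(\alpha-1)^2\bigr)=p^{-2}U_{p-1}(\beta)^2$ with $\beta^2=1-(1-q)^2z/4$, the kernel identity $U_{p-1}^2=\sum_{k=0}^{p-1}U_{2k}$, the expansion coefficient $(-1)^t4^t\tfrac{2k+1}{2t+1}\binom{k+t}{2t}$, and the weighted hockey-stick sum $\sum_{k=0}^{p-1}(2k+1)\binom{k+t}{2t}=(2t+1)\tfrac{p}{t+1}\binom{p+t}{2t+1}$, whose telescoped form $\binom{p+t+1}{2t+2}+\binom{p+t}{2t+2}$ is exactly the binomial combination the paper's own proof ends with. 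Your setup coincides with the paper's up to a change of variables: the paper's $F(z)=\prod_k\bigl(1-\tfrac{q^k}{(1-q^k)^2}\tfrac{(z-1)^2}{z}\bigr)$ is your product with your $z$ parametrized as $-(z-1)^2/\bigl(z(1-q)^2\bigr)$, so your $\alpha$ \emph{is} the paper's $z$, and your root-of-unity collapse $\prod_{k=1}^{p-1}(X-q^k)\equiv (X^p-1)/(X-1)$ is a rederivation of the congruence \eqref{eq:gen1} that the paper cites from Zhao. Where you genuinely diverge is the coefficient extraction: the paper stays in the rationalizing variable, multiplies by $z^t$, and isolates $H_{p-1}^q(\{2\}^t)$ as the coefficient of $(z-1)^{2t}$ via the observation $\binom{t-l}{2t-2l}=0$ unless $l=t$, after which only binomial expansions are needed; you stay in the natural generating-function variable and read off $[z^t]$ of $p^{-2}U_{p-1}(\beta)^2$ through Chebyshev--hypergeometric identities, or in one stroke through Clausen's formula (your parameters $a=\tfrac{1-p}{2}$, $b=\tfrac{1+p}{2}$, $c=\tfrac32$ do satisfy Clausen's condition, and the ${}_3F_2$ coefficient is as you state). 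The paper's finish is more elementary and self-contained; yours is heavier machinery but yields the whole generating function in closed form, ${}_3F_2\bigl(1-p,1+p,1;2,\tfrac32;(1-q)^2z/4\bigr)$, giving all $t$ simultaneously and making the shape $\binom{p+t}{2t+1}/\bigl(p(t+1)\bigr)$ structurally transparent. Two points should be made explicit in a full write-up: the passage to $q=\zeta_p$ is legitimate because $[p]_q$ is irreducible and all denominators involved are coprime to it (as the paper notes in its introduction), so equality at a primitive $p$-th root of unity is equivalent to the congruence; and the branch choice for $\alpha$ (and for $\alpha^{1/2}$) is immaterial, since your final expression is symmetric under $\alpha\mapsto\alpha^{-1}$ and $U_{p-1}(\beta)^2$ is even in $\beta$, hence a genuine polynomial in $z$.
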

\begin{proof}
We start by considering a generating function for the sequence $H_{p-1}^q(\{1\}^l),$ which is given by the product
\begin{equation*}
\prod_{k=1}^{p-1}\left(1+\frac{x}{1-q^k}\right)=\sum_{l=0}^{p-1}\frac{H_{p-1}^q(\{1\}^l)}{(1-q)^l}\,x^l.
\end{equation*}
By \cite[Theorem 2.1]{Zh13}, we have
\begin{equation} \label{eq:gen1}
\prod_{k=1}^{p-1}\left(1+\frac{x}{1-q^k}\right)\equiv -\frac{1}{px}\left(1-(1+x)^p\right) \pmod{[p]_q}.
\end{equation}
We are going to relate our sequence $H_{p-1}^q(\{2\}^t)$ to the finite product
\begin{equation} \label{eq:gen2}
F(z):=\prod_{k=1}^{p-1}\left(1-\frac{q^k}{(1-q^k)^2}\frac{(z-1)^2}{z}\right),
\end{equation}
the limit case of which was studied in \cite{DHP14}.
By (\ref{eq:gen2}) and (\ref{eq:gen1}), we get
\begin{equation} \label{eq:gen3}
\begin{split}
F(z)&=\prod_{k=1}^{p-1}\left(1+\frac{z-1}{1-q^k}\right)\prod_{k=1}^{p-1}\left(1+\frac{z^{-1}-1}{1-q^k}\right) \\[3pt]
&\equiv
\frac{(1-z^p)(1-z^{-p})}{p^2(z-1)(z^{-1}-1)}=\frac{(z^p-1)^2}{p^2z^{p-1}(z-1)^2} \pmod{[p]_q}.
\end{split}
\end{equation}
From the other side, for any integer $t,$ $0\le t\le p-1,$ by expanding the product in (\ref{eq:gen2}), we have
\begin{equation*}
z^tF(z)=\sum_{l=0}^{p-1}\frac{(-1)^lH_{p-1}^q(\{2\}^l)}{(1-q)^{2l}}\,(z-1)^{2l}z^{t-l}.
\end{equation*}
By expanding $z^{t-l}$ in powers of $z-1,$
\begin{equation*}
z^{t-l}=(1+(z-1))^{t-l}=\sum_{k=0}^{\infty}\binom{t-l}{k}(z-1)^k
\end{equation*}
where
\begin{equation*}
\binom{\alpha}{k}=\frac{\alpha(\alpha-1)\cdots(\alpha-k+1)}{k!},
\end{equation*}
we obtain
\begin{equation} \label{eq:gen4}
\begin{split}
z^tF(z)&=\sum_{l=0}^{p-1}\sum_{k=0}^{\infty}(z-1)^{2l+k}\binom{t-l}{k}\frac{(-1)^lH_{p-1}^q(\{2\}^l)}{(1-q)^{2l}} \\[3pt]
&=\sum_{j=0}^{\infty}(z-1)^j\sum_{l=0}^{\min([j/2],p-1)}\binom{t-l}{j-2l}\frac{(-1)^lH_{p-1}^q(\{2\}^l)}{(1-q)^{2l}}.
\end{split}
\end{equation}
To extract $H_{p-1}^q(\{2\}^t)$ from (\ref{eq:gen4}), we consider the coefficient of $(z-1)^{2t}$ on the right-hand side, which is exactly
\begin{equation} \label{eq:gen5}
\sum_{l=0}^t\binom{t-l}{2t-2l}\frac{(-1)^lH_{p-1}^q(\{2\}^l)}{(1-q)^{2l}}=\frac{(-1)^tH_{p-1}^q(\{2\}^t)}{(1-q)^{2t}},
\end{equation}
since $\binom{t-l}{2t-2l}$ is distinct from zero for $l=0,1,\ldots, t$ if and only if $l=t.$

Summarizing (\ref{eq:gen3}), (\ref{eq:gen4}) and (\ref{eq:gen5}), we get
\begin{equation} \label{eq:gen6}
\frac{(-1)^tH_{p-1}^q(\{2\}^t)}{(1-q)^{2t}}\equiv [(z-1)^{2t}]\frac{(z^p-1)^2z^{t-p+1}}{p^2(z-1)^2} \pmod{[p]_q},
\end{equation}
where $[(z-a)^j]f(z)$ is the coefficient of $(z-a)^j$ in Taylor's expansion of $f(z)$ centered at~$a$.
By expanding $\frac{(z^p-1)^2 z^{t-p+1}}{p^2(z-1)^2}$ into powers of $z-1,$ we have
\begin{equation*}
\begin{split}
\frac{(z^p-1)^2 z^{t-p+1}}{p^2(z-1)^2}&=\frac{1}{p^2(z-1)^2}(z^{p+t+1}-2z^{t+1}+z^{t-p+1}) \\
&=\frac{1}{p^2(z-1)^2}\left((1+(z-1))^{p+t+1}-2(1+(z-1))^{t+1}+(1+(z-1))^{t-p+1}\right) \\
&=\frac{1}{p^2(z-1)^2}\left(\sum_{k=0}^{p+t+1}\binom{p+t+1}{k}(z-1)^k-2\sum_{k=0}^{t+1}\binom{t+1}{k}(z-1)^k \right.\\
&+
\left.\sum_{k=0}^{\infty}\binom{t-p+1}{k}(z-1)^k\right),
\end{split}
\end{equation*}
and therefore by (\ref{eq:gen6}),
\begin{equation*}
\frac{(-1)^tH_{p-1}^q(\{2\}^t)}{(1-q)^{2t}}\equiv \frac{1}{p^2}\left(\binom{p+t+1}{2t+2}+\binom{t-p+1}{2t+2}\right)
=\frac{1}{p(t+1)}\binom{p+t}{2t+1} \pmod{[p]_q},
\end{equation*}
that completes the proof.
\end{proof}
By applying a similar argument as above, we are able to find the right-hand side of congruence (\ref{222}) explicitly.
\begin{theorem}
For any positive integer $t$ and any prime $p>t,$ we have
\begin{equation}\label{qcon-13.5}
H_{p-1}^q(\{2\}^t;\{0\}^t)\equiv \left((-1)^t\binom{p-1}{2t+1}+\binom{p-1}{t}\right) \frac{(1-q)^{2t}}{(t+1)p} \pmod{[p]_q}.
\end{equation}
\end{theorem}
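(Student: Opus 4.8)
The plan is to imitate the generating-function argument of Theorem~\ref{T3.1}, but with the generating function tailored to the zero superscript ${\bf t}=\{0\}^t$ instead of $\{1\}^t$. Since
$$
H_{p-1}^q(\{2\}^l;\{0\}^l)=(1-q)^{2l}\sum_{1\le k_1<\cdots<k_l\le p-1}\frac{1}{(1-q^{k_1})^2\cdots(1-q^{k_l})^2},
$$
the natural object to introduce is
$$
G(z):=\prod_{k=1}^{p-1}\left(1-\frac{(z-1)^2}{(1-q^k)^2}\right)
=\sum_{l=0}^{p-1}(-1)^l\frac{H_{p-1}^q(\{2\}^l;\{0\}^l)}{(1-q)^{2l}}\,(z-1)^{2l}.
$$
Because only even powers of $z-1$ occur, the coefficient of $(z-1)^{2t}$ on the right picks out exactly the term $l=t$, namely $(-1)^tH_{p-1}^q(\{2\}^t;\{0\}^t)/(1-q)^{2t}$, provided $t\le p-1$, which is guaranteed by $p>t$.

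The key observation (replacing the $z\mapsto z^{-1}$ symmetry used for $F(z)$) is the difference-of-squares factorization
$$
1-\frac{(z-1)^2}{(1-q^k)^2}=\left(1+\frac{z-1}{1-q^k}\right)\left(1-\frac{z-1}{1-q^k}\right),
$$
which writes $G(z)$ as a product of two level-one generating functions. Applying~\eqref{eq:gen1} once with $x=z-1$ and once with $x=-(z-1)$, and setting $u=z-1$, I would get
$$
G(z)\equiv\frac{(1+u)^p-1}{pu}\cdot\frac{1-(1-u)^p}{pu}
=\frac{\bigl((1+u)^p-1\bigr)\bigl(1-(1-u)^p\bigr)}{p^2u^2}\pmod{[p]_q}.
$$

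It then remains to extract the coefficient of $u^{2t}$. Using $\frac{(1+u)^p-1}{u}=\sum_{j\ge0}\binom{p}{j+1}u^j$ and $\frac{1-(1-u)^p}{u}=\sum_{m\ge0}(-1)^m\binom{p}{m+1}u^m$, this coefficient becomes the convolution
$$
[u^{2t}]G(z)\equiv\frac{1}{p^2}\sum_{i=1}^{2t+1}(-1)^{i-1}\binom{p}{i}\binom{p}{2t+2-i}\pmod{[p]_q}.
$$
The step I expect to require the most care is the evaluation of this truncated convolution. I would complete it to a full convolution and read the latter off from $(1-x^2)^p=\sum_k(-1)^k\binom{p}{k}x^{2k}$: since $\sum_{i=0}^{2t+2}(-1)^i\binom{p}{i}\binom{p}{2t+2-i}=[x^{2t+2}](1-x^2)^p=(-1)^{t+1}\binom{p}{t+1}$, removing the two boundary terms $i=0$ and $i=2t+2$ (each equal to $\binom{p}{2t+2}$) gives
$$
\sum_{i=1}^{2t+1}(-1)^{i-1}\binom{p}{i}\binom{p}{2t+2-i}=(-1)^t\binom{p}{t+1}+2\binom{p}{2t+2}.
$$

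Finally, combining this with the coefficient identification $(-1)^tH_{p-1}^q(\{2\}^t;\{0\}^t)/(1-q)^{2t}\equiv[u^{2t}]G(z)$ and using $\binom{p}{t+1}=\tfrac{p}{t+1}\binom{p-1}{t}$ together with $2\binom{p}{2t+2}=\tfrac{p}{t+1}\binom{p-1}{2t+1}$, I would obtain
$$
H_{p-1}^q(\{2\}^t;\{0\}^t)\equiv\frac{(1-q)^{2t}}{p(t+1)}\left((-1)^t\binom{p-1}{2t+1}+\binom{p-1}{t}\right)\pmod{[p]_q},
$$
which is the asserted congruence. As a consistency check, for $t=1$ this collapses to the Shi--Pan value $-\frac{(p-1)(p-5)}{12}(1-q)^2$.
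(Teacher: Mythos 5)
Your proposal is correct and follows essentially the same route as the paper: the same generating function $\prod_{k=1}^{p-1}\bigl(1-\tfrac{x^2}{(1-q^k)^2}\bigr)$ (your $u=z-1$ is just a cosmetic relabeling), the same difference-of-squares factorization, and the same double application of \eqref{eq:gen1}. The only organizational difference is at the coefficient-extraction step, where the paper first rewrites the product as $1-(1+z)^p-(1-z)^p+(1-z^2)^p$ and reads off the coefficient of $z^{2t+2}$ directly, while you evaluate the equivalent truncated convolution by completing it via $(1-x)^p(1+x)^p=(1-x^2)^p$ — the same underlying identity, arrived at slightly more laboriously.
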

\begin{proof}
 Consider the product
\begin{equation} \label{qcon-14}
F_1(z)=\prod_{k=1}^{p-1}\left(1-\frac{z^2}{(1-q^k)^2}\right)=\sum_{t=0}^{p-1}\frac{(-1)^tH_{p-1}^q(\{2\}^t;\{0\}^t)}{(1-q)^{2t}}z^{2t}.
\end{equation}
Similarly as in the previous proof, we have
\begin{equation} \label{qcon-15}
\begin{split}
F_1(z)&=\prod_{k=1}^{p-1}\left(1-\frac{z}{1-q^k}\right)\left(1+\frac{z}{1-q^k}\right)
\equiv \frac{-1}{p^2z^2}(1-(1-z)^p)(1-(1+z)^p) \\
&=\frac{-1}{p^2z^2}(1-(1+z)^p-(1-z)^p+(1-z^2)^p)
 \pmod{[p]_q}.
\end{split}
\end{equation}
Now by comparing coefficients of $z^{2t}$ on the right-hand sides of (\ref{qcon-14}) and (\ref{qcon-15}), we easily get
$$
\frac{(-1)^tH_{p-1}^q(\{2\}^t;\{0\}^t)}{(1-q)^{2t}}\equiv \frac{-1}{p^2}\left(-\binom{p}{2t+2}-\binom{p}{2t+2}+(-1)^{t+1}\binom{p}{t+1}\right)
$$
which implies (\ref{qcon-13.5}).
\end{proof}
\begin{theorem} \label{T3.3}
For any  integer $t\ge 0$  and
for any prime $p>2t+1$,
\begin{equation*}
S^q_{p-1}(\{2\}^t)\equiv
(1-q)^{2t}\sum_{i=0}^t
\left(t\binom{t-1}{i}+(t-1)\binom{t}{i}\right)
(-1)^{i}K_{2t-i}(p)
\pmod{\qq{p}}.
\end{equation*}
\end{theorem}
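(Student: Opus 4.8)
The plan is to run the generating-function argument of Theorem \ref{T3.1} in reverse: the non-strict sums are the complete-homogeneous analogue of the elementary-symmetric strict sums, so their generating function is the \emph{reciprocal} of $F(z)$. Concretely, writing $u_k=\frac{q^k}{(1-q^k)^2}$, I would introduce
\begin{equation*}
G(z):=\prod_{k=1}^{p-1}\frac{1}{1-u_k\frac{(z-1)^2}{z}}=\frac{1}{F(z)},
\end{equation*}
and note that expanding each factor as a geometric series and collecting terms over multisets $1\le k_1\le\cdots\le k_l\le p-1$ gives
\begin{equation*}
G(z)=\sum_{l=0}^{\infty}\frac{S_{p-1}^q(\{2\}^l)}{(1-q)^{2l}}\,\frac{(z-1)^{2l}}{z^l}.
\end{equation*}
Since $F(1)=1$, the constant term of $F$ as a series in $z-1$ is a unit, so $F$ is invertible and the congruence $F(z)\equiv\frac{(z^p-1)^2}{p^2z^{p-1}(z-1)^2}\pmod{[p]_q}$ from the proof of Theorem \ref{T3.1} passes to reciprocals, yielding
\begin{equation*}
G(z)\equiv\frac{p^2z^{p-1}(z-1)^2}{(z^p-1)^2}=\frac{p^2z^{p-1}}{\phi(z)^2}\pmod{[p]_q},\qquad \phi(z):=\frac{z^p-1}{z-1}.
\end{equation*}

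Next I would extract $S_{p-1}^q(\{2\}^t)$ exactly as in Theorem \ref{T3.1}: multiplying the expansion of $G$ by $z^t$ and reading off the coefficient of $(z-1)^{2t}$ kills every term except $l=t$, since $\binom{t-l}{2t-2l}=0$ for $0\le l<t$ while the terms with $l>t$ begin in degree $2l>2t$. Hence
\begin{equation*}
\frac{S_{p-1}^q(\{2\}^t)}{(1-q)^{2t}}\equiv[(z-1)^{2t}]\,\frac{p^2z^{t+p-1}}{\phi(z)^2}\pmod{[p]_q}.
\end{equation*}

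The crux is to show that $\frac{p^2z^{p-1}}{\phi(z)^2}$ is \emph{linear} in the $K_n(p)$, even though $G$ is morally the square of the generating function $P(z):=\frac{p}{\phi(z)}=\frac{p(z-1)}{z^p-1}=\sum_{n\ge0}(-1)^{n-1}K_n(p)(z-1)^n$ of (\ref{qcon-1}); this collapse of the apparent convolution of two $K$-series into a single sum is the step I expect to be the main obstacle. The mechanism I would use is the differential identity obtained by differentiating $(z-1)\phi(z)=z^p-1$, namely $\phi(z)+(z-1)\phi'(z)=pz^{p-1}$. Dividing by $\phi(z)^2$ and using $\phi'/\phi^2=-\tfrac1p P'(z)$ gives
\begin{equation*}
\frac{p^2z^{p-1}}{\phi(z)^2}=P(z)-(z-1)P'(z)=\sum_{n\ge0}(-1)^{n}(n-1)K_n(p)(z-1)^n,
\end{equation*}
which is indeed linear in the $K_n(p)$ and is routine once the relation $\phi+(z-1)\phi'=pz^{p-1}$ is in hand.

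Finally I would write $z^t=(1+(z-1))^t$ and extract the coefficient of $(z-1)^{2t}$ from the product of this degree-$t$ polynomial with the linear series above, taking $(z-1)^{i}$ from $z^t$ and $(z-1)^{2t-i}$ from the series:
\begin{equation*}
[(z-1)^{2t}]\,\frac{p^2z^{t+p-1}}{\phi(z)^2}=\sum_{i=0}^{t}\binom{t}{i}(-1)^{2t-i}(2t-i-1)K_{2t-i}(p)=\sum_{i=0}^{t}(2t-1-i)\binom{t}{i}(-1)^{i}K_{2t-i}(p).
\end{equation*}
The proof then concludes with the elementary identity $t\binom{t-1}{i}+(t-1)\binom{t}{i}=(t-i)\binom{t}{i}+(t-1)\binom{t}{i}=(2t-1-i)\binom{t}{i}$, which matches the claimed coefficients, together with restoring the factor $(1-q)^{2t}$.
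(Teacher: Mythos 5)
Your proof is correct, and it takes a genuinely different route from the paper's. The paper does not use generating functions for this theorem at all: it starts from identity (23) of \cite{HP13}, which writes $S^q_{n}(\{2\}^t)$ as a single $q$-binomial sum involving $\qbin{n}{k}\qbin{n+k}{k}^{-1}$, applies the congruences (17) and (18) of \cite{Ta13} for $\qq{p}\qbin{p-1}{k}\qbin{p-1+k}{k}^{-1}$ modulo $\qq{p}^2$, simplifies the resulting double sums via (13) and (11) of \cite{Ta13} to arrive at
\begin{equation*}
S^q_{p-1}(\{2\}^t)\equiv (2t-1)\sum_{k=1}^{p-1}\frac{q^{tk}}{\qq{k}^{2t}}-(t-1)(1-q)\sum_{k=1}^{p-1}\frac{q^{tk}}{\qq{k}^{2t-1}}\pmod{\qq{p}},
\end{equation*}
and finishes by converting these depth-one sums with \eqref{HS}. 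You instead observe that the non-strict sums are the complete homogeneous symmetric functions of $u_k=q^k/(1-q^k)^2$, while the strict sums of Theorem \ref{T3.1} are the elementary ones, so their generating function is $1/F(z)$; you invert the congruence \eqref{eq:gen3} (legitimate: both sides have constant term $1$ at $z=1$ and coefficients whose denominators are coprime to $\qq{p}$), and you linearize $p^2z^{p-1}/\phi(z)^2$ in the $K_n(p)$ via $\phi(z)+(z-1)\phi'(z)=pz^{p-1}$, which with $P(z)=p/\phi(z)$ the series of \eqref{qcon-1} gives $p^2z^{p-1}/\phi(z)^2=P(z)-(z-1)P'(z)=\sum_{n\ge 0}(-1)^n(n-1)K_n(p)(z-1)^n$. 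I checked the details: the extraction argument carries over from Theorem \ref{T3.1} (indeed $\binom{t-l}{2t-2l}=0$ for $0\le l<t$, and terms with $l>t$ start in degree $2l>2t$), the differential identity is right, and your coefficient $(2t-1-i)\binom{t}{i}$ does equal the stated $t\binom{t-1}{i}+(t-1)\binom{t}{i}$. What your route buys: it is self-contained given the proof of Theorem \ref{T3.1} — it needs only \eqref{eq:gen1} and the definition \eqref{qcon-1}, not the imported identities of \cite{HP13} and \cite{Ta13} — it makes the elementary/complete-homogeneous duality between $H^q_{p-1}(\{2\}^t)$ and $S^q_{p-1}(\{2\}^t)$ transparent, and it never visibly uses the hypothesis $p>2t+1$ (only whatever restriction \eqref{eq:gen1} itself carries). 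What the paper's route buys is brevity given the cited machinery, together with the intermediate reduction displayed above, which is of some independent interest.
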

\begin{proof}
By (23) in \cite{HP13}, we have
$$S^q_n(\{ 2\}^t)=\sum_{k=1}^n
\qbin{n}{k}\qbin{n+k}{k}^{-1}\frac{(-1)^{k-1}(1+q^k)q^{\binom{k}{2}+tk}}{\qq{k}^{2t}},
$$
where
$$\qbin{n}{k}=\prod_{j=1}^k\frac{1-q^{n-k+j}}{1-q^j}$$
is the {\sl Gaussian $q$-binomial coefficient}.
By (17) and (18) in \cite{Ta13}, for $k=1,\dots,p-1$,
$$\qq{p}\qbin{p-1}{k}\qbin{p-1+k}{k}^{-1}\equiv
(-1)^{k}q^{-\binom{k}{2}-k}
\left(\qq{k}-\qq{p}
-\qq{p}\qq{k}\sum_{j=1}^{k-1}\frac{1+q^j}{\qq{j}}
\right)\!\!\pmod{\qq{p}^2}.$$
Hence, for $n=p-1$ with $p>2t+1$, we obtain
\begin{align*}
S^q_{p-1}(\{ 2\}^t)
&\equiv\frac{1}{\qq{p}}\sum_{k=1}^{p-1}
\frac{(1+q^k)q^{(t-1)k}}{\qq{k}^{2t}}
\left(-\qq{k}+\qq{p}
+\qq{p}\qq{k}\sum_{j=1}^{k-1}\frac{1+q^j}{\qq{j}}\right)\\
&\equiv-\frac{1}{\qq{p}}\sum_{k=1}^{p-1}
\frac{q^{(t-1)k}+q^{tk}}{\qq{k}^{2t-1}}
 +\sum_{k=1}^{p-1}
\frac{q^{(t-1)k}+q^{tk}}{\qq{k}^{2t}}\\
&\qquad +\sum_{1\leq j<k\leq p-1}
\frac{(q^{tk}+q^{j+(t-1)k})+(q^{(t-1)k}+q^{j+tk})}{\qq{j}\qq{k}^{2t-1}}
\pmod{\qq{p}}.
\end{align*}
By (13) in \cite{Ta13}, we have
$$\sum_{1\leq j<k\leq p-1}
\frac{q^{tk}+q^{j+(t-1)k}}{\qq{j}\qq{k}^{2t-1}}
\equiv
\sum_{k=1}^{p-1}\frac{1}{\qq{j}}
\sum_{k=1}^{p-1}\frac{q^{tk}}{\qq{k}^{2t-1}}
-\sum_{k=1}^{p-1}\frac{q^{tk}}{\qq{k}^{2t}}
\pmod{\qq{p}},$$
and
$$\sum_{1\leq j<k\leq p-1}
\frac{q^{(t-1)k}+q^{j+tk}}{\qq{j}\qq{k}^{2t-1}}
\equiv
\sum_{k=1}^{p-1}\frac{1}{\qq{j}}
\sum_{k=1}^{p-1}\frac{q^{(t-1)k}}{\qq{k}^{2t-1}}
-\sum_{k=1}^{p-1}\frac{q^{(t-1)k}}{\qq{k}^{2t}}
\pmod{\qq{p}}.$$
Hence
\begin{align*}
S^q_{p-1}(\{ 2\}^t)
&\equiv
\left(-\frac{1}{\qq{p}}+\sum_{k=1}^{p-1}\frac{1}{\qq{j}}\right)
\sum_{k=1}^{p-1}
\frac{q^{(t-1)k}+q^{tk}}{\qq{k}^{2t-1}}\\
&\equiv
(2t-1)\sum_{k=1}^{p-1}\frac{q^{tk}}{\qq{k}^{2t}}
-(t-1)(1-q)\sum_{k=1}^{p-1}\frac{q^{tk}}{\qq{k}^{2t-1}}
\pmod{\qq{p}},
\end{align*}
where we used (11) in \cite{Ta13}.
Finally, we apply \eqref{HS}, which completes the proof.
\end{proof}

\section{Homogeneous sums $H_{p-1}^q(\{3\}^t)$}

In this section, we consider homogeneous sums $H_{p-1}^q(\{s\}^t)$ when $s=3$. Before proving the main result of this section, we state the following
combinatorial lemma.
We owe the proof of this lemma to Robin Chapman \cite{RC15}.
\begin{lemma} \label{CL}
Let $p, s$ be positive integers, $\delta\in\{0, 1\}$ and
\begin{align*}
T(p,s,\delta)&:=[x^{3s}]\sum_{k=0}^{p-\delta}(-1)^k\binom{p-k-\delta}{k-\delta}(2-x)^{p-2k}(1+x)^{s-k}, \\
\widetilde{T}(p,s,\delta)&:=[x^{3s}]\sum_{k=0}^{p-\delta}(-1)^k\binom{p-k-\delta}{k-\delta}(2-x)^{p-2k}(1+x)^{p+s-k}.
\end{align*}
Then
$$
T(p,s,\delta)=(-1)^{\delta}\binom{s+p-2\delta+1}{3s+1}, \quad
\widetilde{T}(p,s,\delta)=(-1)^{s+\delta}\binom{2s+p-2\delta+1}{3s+1}.
$$
\end{lemma}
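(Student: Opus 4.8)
The plan is to turn the coefficient extraction into reading off a single coefficient of a negative power of $1-t$, by introducing an auxiliary variable $t$ and a rationalising substitution. Write $\Sigma_{P,S}:=\sum_{k}(-1)^k\binom{P-k}{k}(2-x)^{P-2k}(1+x)^{S-k}$ for the kernel attached to $T$ (so $T(p,s,0)=[x^{3s}]\Sigma_{p,s}$), and $\widetilde{\Sigma}_{P,S}$ for the one attached to $\widetilde{T}$, with $(1+x)^{P+S-k}$ in place of $(1+x)^{S-k}$. First I would record the elementary expansion
$$
\frac{1}{1-at+bt^2}=\sum_{P\ge 0}t^P\sum_{k}\binom{P-k}{k}a^{P-2k}(-b)^k,
$$
and apply it with $a=2-x$, $b=1/(1+x)$. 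This gives
$$
\Sigma_{P,S}=(1+x)^{S+1}[t^P]\frac{1}{tx^2+(1-t)x+(1-t)^2}.
$$
For $\widetilde{\Sigma}_{P,S}$ the additional factor $(1+x)^{P}$ is absorbed into the $t$-variable via $(1+x)^P[t^P]g(t)=[t^P]g((1+x)t)$, which after simplification produces the companion formula
$$
\widetilde{\Sigma}_{P,S}=(1+x)^{S}[t^P]\frac{1}{tx^2-t(1-t)x+(1-t)^2}.
$$

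Next I would extract $[x^{3s}]$. Each denominator is a quadratic $t(x-x_1)(x-x_2)$ in $x$; its partial fraction decomposition over $x$ has a polynomial part of $x$-degree strictly less than $3s$, hence invisible to $[x^{3s}]$, while $[x^{3s}](x-x_i)^{-1}=-x_i^{-3s-1}$. The key device is to rationalise the roots: the substitution $t=\beta/(1+\beta)^2$ turns the $T$-roots into $x_1=-\tfrac{1+\beta+\beta^2}{1+\beta}$ and $x_2=-\tfrac{1+\beta+\beta^2}{\beta(1+\beta)}$, so that $1+x_1$, $1+x_2$ and $x_1-x_2$ all become monomials in $\beta$ and $1+\beta$; the reciprocal substitution $t=(1+\gamma)^2/\gamma$ does the same job for the $\widetilde{T}$-denominator. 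Using $1+\beta+\beta^2=(1+\beta)^2(1-t)$ (respectively $1+\gamma+\gamma^2=-\gamma(1-t)$) together with $\beta/(1+\beta)^2=t$, the entire $x$-coefficient collapses to a pure power:
$$
[x^{3s}]\Sigma_{p,s}=[t^{p}]\frac{t^{2s}}{(1-t)^{3s+2}},\qquad
[x^{3s}]\widetilde{\Sigma}_{p,s}=[t^{p}](-1)^s\frac{t^{s}}{(1-t)^{3s+2}}.
$$
Reading these off through $[t^m](1-t)^{-(3s+2)}=\binom{m+3s+1}{3s+1}$ gives exactly $\binom{s+p+1}{3s+1}$ and $(-1)^s\binom{2s+p+1}{3s+1}$, which is the case $\delta=0$.

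Finally, the case $\delta=1$ I would reduce to $\delta=0$ by the index shift $k\mapsto k+1$ in $\binom{p-k-1}{k-1}$, which gives cleanly
$$
\Sigma^{(1)}_{p,s}=-\Sigma_{p-2,\,s-1},\qquad
\widetilde{\Sigma}^{(1)}_{p,s}=-\widetilde{\Sigma}_{p-2,\,s+1},
$$
with no spurious rational prefactor. Running the very same extraction — still with $[x^{3s}]$, now evaluated off the natural diagonal — reproduces the overall sign $(-1)^{\delta}$ and the shift $p\mapsto p-2\delta$, so that $T(p,s,1)=-\binom{s+p-1}{3s+1}$ and $\widetilde{T}(p,s,1)=(-1)^{s+1}\binom{2s+p-1}{3s+1}$, as claimed.

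I expect the main obstacle to be the bookkeeping in the rationalisation step: one must select the correct, mutually reciprocal substitutions for the two denominators and then verify that after cancellation the $x$-coefficient is genuinely a single power $t^{\,j}(1-t)^{-(3s+2)}$, with every stray factor of $1+\beta$ (or $1+\gamma$) disappearing; the auxiliary check that the polynomial part of each partial fraction remains below degree $3s$ is minor but must be made explicit. Everything else is formal power-series manipulation once these two substitutions are in place.
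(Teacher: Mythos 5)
Your proposal is correct, and its backbone coincides with the paper's: both convert the $k$-sum into a coefficient of a bivariate rational function. Indeed your kernels $(1+x)^{S+1}/\bigl(tx^2+(1-t)x+(1-t)^2\bigr)$ and $(1+x)^{S}/\bigl(tx^2-t(1-t)x+(1-t)^2\bigr)$ are exactly the rational functions the paper reaches (with $t=y$) by summing its generating functions $G_{s,\delta}$ and $\widetilde{G}_{s,\delta}$ over $p$. Where you genuinely diverge is in how $[x^{3s}]$ is extracted and how $\delta$ is treated. The paper never exhibits the roots: it expands the reciprocal quadratic as $\sum_j a_jx^j$ with $a_j=(\alpha^{j+1}-\beta^{j+1})/(\alpha-\beta)$, expands $(1+x)^{s-\delta+1}$ binomially, and collapses the sum via the symmetric identities $\alpha^2(1+\alpha)=\beta^2(1+\beta)=y^2/(1-y)^3$ and $\widetilde{\alpha}^2(1+\widetilde{\alpha})=\widetilde{\beta}^2(1+\widetilde{\beta})=-y/(1-y)^3$, handling $\delta=0$ and $\delta=1$ in one uniform computation, since $\bigl(\alpha^{\delta}(1+\alpha)^{1-\delta}-\beta^{\delta}(1+\beta)^{1-\delta}\bigr)/(\alpha-\beta)=1$ in both cases. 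You instead perform partial fractions with explicit roots; your substitutions $t=\beta/(1+\beta)^2$ and $t=(1+\gamma)^2/\gamma$ are precisely rational parametrizations of those two cubic relations, so the collapse becomes a monomial calculation, and you dispose of $\delta=1$ by the reductions $\Sigma^{(1)}_{p,s}=-\Sigma_{p-2,s-1}$, $\widetilde{\Sigma}^{(1)}_{p,s}=-\widetilde{\Sigma}_{p-2,s+1}$, which are correct. The paper's route buys freedom from radicals and from case-splitting; yours buys transparency, since every cancellation is mechanical.

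Three details should be made honest in a final write-up. First, $x_1-x_2=(1-\beta)(1+\beta+\beta^2)/\bigl(\beta(1+\beta)\bigr)$ is not a monomial in $\beta$ and $1+\beta$, though the stray factors cancel exactly as you anticipate. Second, $t=(1+\gamma)^2/\gamma$ is not a power-series substitution; the step should be phrased as verifying an identity of rational functions of $t$ after the injective embedding $\mathbb{Q}(t)\hookrightarrow\mathbb{Q}(\gamma)$, which is legitimate because $[x^{3s}]$ of the kernel is a priori an element of $\mathbb{Q}(t)$ (the kernel's denominator equals $(1-t)^2\neq 0$ at $x=0$, and the partial-fraction expression is symmetric in the roots). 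Third, your assertion that the off-diagonal extraction for $\delta=1$ simply reproduces the sign $(-1)^{\delta}$ and the shift $p\mapsto p-2\delta$ amounts to the claim that replacing the numerator exponent $s+1$ by $s$ (resp.\ $s$ by $s+1$) leaves the extracted rational functions $t^{2s}/(1-t)^{3s+2}$ and $(-1)^st^s/(1-t)^{3s+2}$ unchanged; this is true, but it is a computation rather than a formality, and it must be carried out — I checked that it does go through.
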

\begin{proof}
Consider  generating functions of both sequences
$$
G_{s,\delta}(y)=(-1)^{\delta}\sum_{p=1}^{\infty}T(p,s,\delta)y^p \quad\text{and}\quad
\widetilde{G}_{s,\delta}(y)=(-1)^{\delta}\sum_{p=1}^{\infty}\widetilde{T}(p,s,\delta)y^p.
$$
Then we have
\begin{align*}
G_{s,\delta}(y)
&=
[x^{3s}]\sum_{p=1}^{\infty}\sum_{k=0}^{p-2\delta}(-1)^{k}\binom{p-k-2\delta}{k}
(2-x)^{p-2k-2\delta}(1+x)^{s-k-\delta}y^p\\
&=
y^{2\delta}[x^{3s}](1+x)^{s-\delta}\sum_{r\ge 0}\sum_{k=0}^r(-1)^k\binom{r}{k}
(2-x)^{r-k}(1+x)^{-k}y^{k+r}\\
&=y^{2\delta}[x^{3s}](1+x)^{s-\delta}
\sum_{r\ge 0}y^r(2-x)^r\left(1-\frac{y}{(2-x)(1+x)}\right)^{r}\\
&=y^{2\delta}[x^{3s}]\frac{(1+x)^{s-\delta}}{
1-y(2-x)\left(1-\frac{y}{(2-x)(1+x)}\right)}\\
&=y^{2\delta}[x^{3s}]\frac{(1+x)^{s-\delta+1}}{
1+x-y(2-x)(1+x)+y^2}\\
&=y^{2\delta}[x^{3s}]\frac{(1+x)^{s-\delta+1}}{
(1-y)^2+(1-y)x+yx^2}.
\end{align*}
Hence, by letting $u=1/(1-y)$, we easily obtain
\begin{align*}
(-1)^{\delta}T(p,s,\delta)
&=[y^{p-2\delta}][x^{3s}]
\frac{(1+x)^{s-\delta+1}}{
(1-y)^2+(1-y)x+yx^2}\\
&=[y^{p-2\delta}]\frac{[x^{3s}]}{(1-y)^2}
\frac{(1+x)^{s-\delta+1}}{
1+ux+(u^2-u)x^2}\\
&=[y^{p-2\delta}]\frac{[x^{3s}]}{(1-y)^2}
(1+x)^{s-\delta+1}\sum_{j=0}^{\infty}a_j x^j\\
&=[y^{p-2\delta}]\frac{1}{(1-y)^2}
\sum_{j=0}^{s-\delta+1}\binom{s-\delta+1}{j}
a_{2s+\delta-1+j} \\
&=[y^{p-2\delta}]\frac{1}{(1-y)^2}
\sum_{j=0}^{s-\delta+1}\binom{s-\delta+1}{j}
\frac{\alpha^{2s+\delta+j}-\beta^{2s+\delta+j}}{\alpha-\beta} \\
&=[y^{p-2\delta}]\frac{1}{(1-y)^2}
\frac{\alpha^{2s+\delta}(1+\alpha)^{s-\delta+1}-\beta^{2s+\delta}(1+\beta)^{s-\delta+1}}{\alpha-\beta} \\
&=[y^{p-2\delta}]\frac{y^{2s}}{(1-y)^{3s+2}}
\frac{\alpha^{\delta}(1+\alpha)^{1-\delta}-\beta^{\delta}(1+\beta)^{1-\delta}}{\alpha-\beta} \\
&=[y^{p-2\delta-2s}](1-y)^{-(3s+2)}
=\binom{s+p-2\delta+1}{3s+1}
\end{align*}
where we used the fact that $\alpha^2(1+\alpha)=\beta^2(1+\beta)=u(u-1)^2=y^2/(1-y)^3$.

Similarly, for the second generating function, we have
\begin{align*}
\widetilde{G}_{s,\delta}(y)
&=
[x^{3s}]\sum_{p=1}^{\infty}\sum_{k=0}^{p-2\delta}(-1)^{k}\binom{p-k-2\delta}{k}
(2-x)^{p-2k-2\delta}(1+x)^{p+s-k-\delta}y^p\\
&=
y^{2\delta}[x^{3s}](1+x)^{s+\delta}\sum_{r\ge 0}\sum_{k=0}^r(-1)^k\binom{r}{k}
(2-x)^{r-k}(1+x)^{r}y^{k+r}\\
&=y^{2\delta}[x^{3s}](1+x)^{s+\delta}
\sum_{r\ge 0}y^r(2-x)^r(1+x)^r\left(1-\frac{y}{2-x}\right)^{r}\\
&=y^{2\delta}[x^{3s}]\frac{(1+x)^{s+\delta}}{
1-y(1+x)(2-x-y)}\\
&=y^{2\delta}[x^{3s}]\frac{(1+x)^{s+\delta}}{
(1-y)^2-y(1-y)x+yx^2}.
\end{align*}
By comparing the coefficients of the powers of $y$ on both sides, we get
\begin{align*}
(-1)^{\delta}\widetilde{T}(p,s,\delta)
&=[y^{p-2\delta}][x^{3s}]
\frac{(1+x)^{s+\delta}}{
(1-y)^2-y(1-y)x+yx^2}\\
&=[y^{p-2\delta}]\frac{[x^{3s}]}{(1-y)^2}
\frac{(1+x)^{s+\delta}}{
1-(u-1)x+(u^2-u)x^2}\\
&=[y^{p-2\delta}]\frac{[x^{3s}]}{(1-y)^2}
(1+x)^{s+\delta}\sum_{j=0}^{\infty}\widetilde{a}_j x^j\\
&=[y^{p-2\delta}]\frac{1}{(1-y)^2}
\sum_{j=0}^{s+\delta}\binom{s+\delta}{j}
\widetilde{a}_{2s-\delta+j} \\
&=[y^{p-2\delta}]\frac{1}{(1-y)^2}
\sum_{j=0}^{s+\delta}\binom{s+\delta}{j}
\frac{\widetilde{\alpha}^{2s-\delta+j+1}-\widetilde{\beta}^{2s-\delta+j+1}}{\widetilde{\alpha}-\widetilde{\beta}} \\
&=[y^{p-2\delta}]\frac{1}{(1-y)^2}
\frac{\widetilde{\alpha}^{2s-\delta+1}(1+\widetilde{\alpha})^{s+\delta}-
\widetilde{\beta}^{2s-\delta+1}(1+\widetilde{\beta})^{s+\delta}}{\widetilde{\alpha}-\widetilde{\beta}} \\
&=[y^{p-2\delta}]\frac{(-1)^sy^{s}}{(1-y)^{3s+2}}
\frac{\widetilde{\alpha}^{1-\delta}(1+\widetilde{\alpha})^{\delta}-\widetilde{\beta}^{1-\delta}(1+\widetilde{\beta})^{\delta}}{\widetilde{\alpha}-\widetilde{\beta}} \\
&=(-1)^s[y^{p-2\delta-s}](1-y)^{-(3s+2)}
=(-1)^s\binom{2s+p-2\delta+1}{3s+1}
\end{align*}
where we used the fact that $\widetilde{\alpha}^2(1+\widetilde{\alpha})=\widetilde{\beta}^2(1+\widetilde{\beta})=u^2(1-u)=-y/(1-y)^3$.
\end{proof}

\begin{theorem} \label{T33.3}
For any positive integer $t$ and for any prime $p>t$,
\begin{equation}\label{33}
\frac{H_{p-1}^q(\{3\}^t)}{(1-q)^{3t}}\equiv
\frac{1}{(t+1)p^2}
\left(\binom{p+2t+1}{3t+2}+\binom{-p+2t+1}{3t+2}\right)
 \pmod{[p]_q}.
\end{equation}
\end{theorem}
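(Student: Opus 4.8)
The plan is to mimic the proof of Theorem \ref{T3.1} for the string of twos, replacing the quadratic substitution by a cubic one. First I would introduce the generating function
$$
F(z):=\prod_{k=1}^{p-1}\left(1-\frac{q^{2k}}{(1-q^k)^3}\frac{(z-1)^3}{z^2}\right),
$$
which, upon selecting the second summand from $l$ of the factors, expands as $F(z)=\sum_{l\ge 0}\frac{(-1)^lH_{p-1}^q(\{3\}^l)}{(1-q)^{3l}}\frac{(z-1)^{3l}}{z^{2l}}$ (here one uses that the $\{3\}$-terms are $q^{2k}/(1-q^k)^3=q^{2k}/((1-q)^3[k]_q^3)$). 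Multiplying by $z^{2t}$ and expanding $z^{2t-2l}=\sum_k\binom{2t-2l}{k}(z-1)^k$ exactly as in (\ref{eq:gen4})--(\ref{eq:gen5}), the coefficient of $(z-1)^{3t}$ isolates the target: the contribution of index $l$ carries the factor $\binom{2(t-l)}{3(t-l)}$, which vanishes for $0\le l<t$ and equals $1$ for $l=t$. Hence $\frac{(-1)^tH_{p-1}^q(\{3\}^t)}{(1-q)^{3t}}=[(z-1)^{3t}]\,z^{2t}F(z)$.

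Next I would factor the $k$-th factor into three factors of the shape appearing in (\ref{eq:gen1}). Writing $a=1/(1-q^k)$ and using $q^{2k}/(1-q^k)^3=a(a-1)^2$, a comparison of elementary symmetric functions shows that
$$
1-a(a-1)^2\frac{(z-1)^3}{z^2}=\prod_{i=1}^3\left(1+\frac{w_i-1}{1-q^k}\right),
$$
where $w_1,w_2,w_3$ are the three roots in $W$ of $(W-1)^3+\frac{(z-1)^3}{z^2}W^2=0$, equivalently of $W^3-\sigma W^2+3W-1=0$ with $\sigma:=3-(z-1)^3/z^2$. In particular $w_1w_2w_3=1$ and $\prod_i(w_i-1)=-(z-1)^3/z^2$. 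Applying (\ref{eq:gen1}) to each of the three factors gives, modulo $[p]_q$,
$$
F(z)\equiv\frac{-1}{p^3}\,\frac{\prod_{i=1}^3(1-w_i^{\,p})}{\prod_{i=1}^3(w_i-1)}=\frac{z^2}{p^3(z-1)^3}\prod_{i=1}^3\bigl(1-w_i^{\,p}\bigr).
$$

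Since the three roots have product $1$, we have $w_iw_j=w_k^{-1}$, so expanding the elementary symmetric functions of the $w_i^{\,p}$ collapses the product to the clean form $\prod_{i=1}^3(1-w_i^{\,p})=\sum_{i=1}^3(w_i^{-p}-w_i^{\,p})=p_{-p}-p_{p}$, where $p_m=\sum_iw_i^m$ denotes the $m$-th power sum of the cubic. The power sums $p_{\pm p}$ satisfy the linear recurrence with characteristic polynomial $W^3-\sigma W^2+3W-1$; solving it and inserting $\sigma=3-(z-1)^3/z^2$ expresses $p_{p}$ and $p_{-p}$ through sums of the type $\sum_k(-1)^k\binom{p-k-\delta}{k-\delta}(2-x)^{p-2k}(1+x)^{s-k}$ that are evaluated in Lemma \ref{CL}, the two power sums corresponding to the two exponents $(1+x)^{s-k}$ and $(1+x)^{p+s-k}$ appearing in $T(p,s,\delta)$ and $\widetilde T(p,s,\delta)$. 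Extracting $[(z-1)^{3t}]$ of $z^{2t+2}(p_{-p}-p_p)/(p^3(z-1)^3)$ then reduces to coefficient extractions of the form handled by Lemma \ref{CL} for the two parities $\delta\in\{0,1\}$; by that lemma these evaluate to explicit binomial coefficients which, after elementary binomial simplifications absorbing one factor of $p$ (as $\binom{p+2t+1}{3t+2}$ carries a factor $p$), assemble into $\frac{1}{(t+1)p^2}\bigl(\binom{p+2t+1}{3t+2}+\binom{-p+2t+1}{3t+2}\bigr)$, which is precisely (\ref{33}).

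The hard part will be the last step: organizing the power sums $p_{\pm p}$ of the cubic into a form to which Lemma \ref{CL} applies verbatim, and then performing the $(z-1)$-coefficient extraction so that the Chebyshev-type sums over $k$ telescope to exactly the two stated binomials with the correct $\delta$-bookkeeping and signs; the mismatch between the bottom index $3t+1$ produced by Lemma \ref{CL} and the desired $3t+2$, together with the reduction $\frac{1}{p^3}\to\frac{1}{(t+1)p^2}$, is where the delicate accounting lies. A secondary point to verify is that the substitution $x=w_i-1$ into (\ref{eq:gen1}), and all manipulations with the algebraic functions $w_i(z)$, are legitimate in the appropriate ring of power series (in $z-1$) with coefficients taken modulo $[p]_q$.
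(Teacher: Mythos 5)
Your setup is correct and runs parallel to the paper's argument for a while: the product $F(z)=\prod_{k=1}^{p-1}\bigl(1-\frac{q^{2k}}{(1-q^k)^3}\frac{(z-1)^3}{z^2}\bigr)$ does encode $H_{p-1}^q(\{3\}^l)=H_{p-1}^q(\{3\}^l;\{2\}^l)$, the extraction $[(z-1)^{3t}]z^{2t}F(z)$ isolates the target term, the factorization into three linear factors with $w_i$ the roots of $(W-1)^3+\frac{(z-1)^3}{z^2}W^2=0$ checks out, and the collapse $\prod_{i=1}^3(1-w_i^p)=p_{-p}-p_p$ is a genuinely nice observation. The gap is the final step, and it is not mere bookkeeping: your cubic $W^3-\sigma W^2+3W-1$ with $\sigma=3-(z-1)^3/z^2$ is \emph{irreducible} over $\mathbb{Q}(z)$. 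Indeed, a root $W(z)\in\mathbb{Q}(z)$ would have to be a M\"obius map (comparing degrees of both sides of $(W-1)^3/W^2=-(z-1)^3/z^2$ forces $\deg W=1$), and matching the triple zero at $1$, the double pole at $0$ and the simple pole at $\infty$ forces $W$ to fix $0,1,\infty$, hence $W=z$, which gives $\phi(z)=-\phi(z)$ where $\phi(x)=(x-1)^3/x^2$, a contradiction. So all three $w_i$ are conjugate cubic irrationalities, and $p_{\pm p}$ are power sums of an irreducible cubic. But the sums evaluated in Lemma \ref{CL} are, via Kummer's formula, precisely \emph{two}-term power sums $\alpha^p+\beta^p$ of the quadratic with $\alpha+\beta=2-x$, $\alpha\beta=1/(1+x)$, multiplied by $(1+x)^{s}$ or $(1+x)^{p+s}$; they cannot represent power sums of an irreducible cubic, for which one would need a three-variable Girard--Waring expansion (a double sum) and hence a new combinatorial lemma. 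Your identification of $T$ and $\widetilde T$ with the two power sums $p_p$, $p_{-p}$ is also a misreading of how those quantities arise: in the paper both $T$ and $\widetilde T$ come from the \emph{same} quadratic power sum (the Chebyshev term), multiplied respectively by $z^{t+1}$ and $z^{p+t+1}$ coming from the factor $z^p-1$.

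The missing idea is exactly the step you skipped. The paper first applies the duality (\ref{EE1}) to replace $H_{p-1}^q(\{3\}^t)=H_{p-1}^q(\{3\}^t;\{2\}^t)$ by $(-1)^tH_{p-1}^q(\{3\}^t;\{1\}^t)$, whose natural generating function uses $q^k/(1-q^k)^3=a^2(a-1)$ and the variable combination $(z-1)^3/z$ rather than $(z-1)^3/z^2$. The associated cubic is then $(U-1)^3=\frac{(z-1)^3}{z}\,U$, which has the rational root $U=z$; the remaining quadratic factor has root sum $3-z$ and root product $1/z$, so Kummer's formula converts its power sum into $\frac{2}{z^{p/2}}T_p\bigl(\frac{\sqrt{z}(3-z)}{2}\bigr)$, and this is exactly the object Lemma \ref{CL} is built to handle. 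With your choice of numerator $q^{2k}$ the cubic becomes the fiber of $\phi$ over $-\phi(z)$ instead of the fiber over $+\phi(z)$, and the sign difference is precisely what destroys the rational root. So the preliminary duality reduction in the paper is not cosmetic; it is the step that makes the linear-times-quadratic splitting, and therefore the entire Chebyshev/Lemma \ref{CL} machinery, available. Your proof can be repaired by inserting that reduction and then proceeding as you describe, at which point it coincides with the paper's proof.
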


\begin{proof}
Since by (\ref{EE1}), $H_{p-1}^q(\{3\}^t)\equiv (-1)^t H_{p-1}^q(\{3\}^t; \{1\}^t)
\pmod{[p]_q}$, we may deal with $H_{p-1}^q(\{3\}^t; \{1\}^t)$.
A generating function for the sequence $H_{p-1}^q(\{3\}^t; \{1\}^t)$
can be derived from the product
$$
F(z)=\prod_{k=1}^{p-1}\left(1+\frac{q^k}{(1-q^k)^3}\frac{(z-1)^3}{z}\right).
$$
We have
$$
z^tF(z)=\sum_{l=0}^{p-1}\frac{H_{p-1}^q(\{3\}^l; \{1\}^l)}{(1-q)^{3l}}\,(z-1)^{3l}z^{t-l}
$$
and since
$$
z^{t-l}=(1+(z-1))^{t-l}=\sum_{k=0}^{\infty}\binom{t-l}{k}(z-1)^k,
$$
we get
\begin{equation*}
\begin{split}
z^tF(z)&=\sum_{l=0}^{p-1}\sum_{k=0}^{\infty}
(z-1)^{3l+k}\binom{t-l}{k}\frac{H_{p-1}^q(\{3\}^l; \{1\}^l)}{(1-q)^{3l}} \\
&=\sum_{j=0}^{\infty}(z-1)^j
\sum_{l=0}^{\min([j/3],p-1)}\binom{t-l}{j-3l}\frac{H_{p-1}^q(\{3\}^l; \{1\}^l)}{(1-q)^{3l}}.
\end{split}
\end{equation*}
To extract $H_{p-1}^q(\{3\}^t; \{1\}^t),$ we consider
\begin{equation} \label{11}
[(z-1)^{3t}]z^tF(z)=\sum_{l=0}^{t}\binom{t-l}{3t-3l}
\frac{H_{p-1}^q(\{3\}^l; \{1\}^l)}{(1-q)^{3l}}=
\frac{H_{p-1}^q(\{3\}^t; \{1\}^t)}{(1-q)^{3t}}.
\end{equation}
From the other side, we can factor $F(z)$ as
$$
F(z)=\prod_{k=1}^{p-1}\left(1+\frac{z-1}{1-q^k}\right)
\left(1-\frac{\frac{(z-1)}{2}(1+\sqrt{1-4/z})}{1-q^k}
\right)\left(1-\frac{\frac{(z-1)}{2}(1-\sqrt{1-4/z})}{1-q^k}
\right).
$$
Then by (\ref{eq:gen1}) we have
\begin{align*}
F(z)\equiv -\frac{(1-z^p)}{p(z-1)}\cdot&
\frac{\bigl(1-(1-\frac{(z-1)}{2}(1+\sqrt{1-4/z}))^p\bigr)}
{p(z-1)(1+\sqrt{1-4/z})}\\
&\qquad\quad\times \frac{\bigl(1-(1-\frac{(z-1)}{2}(1-\sqrt{1-4/z}))^p\bigr)}
{p(z-1)(1-\sqrt{1-4/z})} \pmod{[p]_q},
\end{align*}
or
$$F(z)\equiv
\frac{z(z^p-1)}{p^3(z-1)^3}\left(1+\frac{1}{z^p}-
\frac{2}{z^{p/2}}T_p\left(\frac{\sqrt{z}(3-z)}{2}\right)\right)
 \pmod{[p]_q}.
$$
where $T_n(x)$ denotes the
Chebyshev polynomial of the first kind.

Hence, by (\ref{11}), we conclude that
\begin{equation} \label{oo}
\frac{H_{p-1}^q(\{3\}^t; \{1\}^t)}{(1-q)^{3t}}
\equiv
[(z-1)^{3t}]\frac{z^{t+1}(z^p-1)}{p^3(z-1)^3}
\left(1+\frac{1}{z^p}-
\frac{2}{z^{p/2}}T_p\left(\frac{\sqrt{z}(3-z)}{2}\right)\right) \pmod{[p]_q}.
\end{equation}
By Kummer's  formula
$$
a^n+b^n=\sum_{k=0}^{[n/2]}(-1)^k\frac{n}{n-k}\binom{n-k}{k}(ab)^k
(a+b)^{n-2k},
$$
we have
\begin{equation} \label{oa}
\frac{2}{z^{p/2}}T_p\left(\frac{\sqrt{z}(3-z)}{2}\right)=\sum_{k=0}^{(p-1)/2}(-1)^k\frac{p}{p-k}\binom{p-k}{k}\frac{1}{z^k}
(3-z)^{p-2k}.
\end{equation}
Taking into account that
$$\frac{p}{p-k}\binom{p-k}{k}=\binom{p-k}{k}+\binom{p-k-1}{k-1}$$
and by substituting (\ref{oa}) into (\ref{oo}), we get modulo $[p]_q$,
\begin{equation*}
\begin{split}
\frac{H_{p-1}^q(\{3\}^t; \{1\}^t)}{(1-q)^{3t}}
&\equiv \frac{1}{p^3}
[(z-1)^{3t+3}](z^{t+p+1}-z^{t+1-p}) \\
&-\frac{1}{p^3}(\widetilde{T}(p,t+1,0)-T(p,t+1,0)+\widetilde{T}(p,t+1,1)-T(p,t+1,1)).
\end{split}
\end{equation*}
Since
$$
[(z-1)^{3t+3}](z^{t+p+1}-z^{t+1-p})=\binom{p+t+1}{3t+3}+(-1)^t\binom{p+2t+1}{3t+3},
$$
by Lemma \ref{CL}, we readily get the desired congruence.
\end{proof}

\section{Quasi-homogeneous sums $H_{p-1}^q(\{2\}^a,3,\{2\}^b)$ and $S_{p-1}^q(\{2\}^a,3,\{2\}^b)$}

The aim of this section is to prove first congruences for quasi-homogeneous $q$-harmonic sums.

\begin{theorem} \label{Tx.x}
For any integers $a,b\geq 0$  and
for any prime $p>2t+1$ with $t=a+b+1$,
\begin{equation*}
\begin{split}
&H_{p-1}^q(\{2\}^a,3,\{2\}^b)+H_{p-1}^q(\{2\}^b,3,\{2\}^a)
\equiv
(-1)^{t-1}\binom{p+t}{2t+1}
\frac{(1-q)^{2t+1}}{p(t+1)}
\pmod{\qq{p}},\\
&S_{p-1}^q(\{2\}^a,3,\{2\}^b)+S_{p-1}^q(\{2\}^b,3,\{2\}^a) \\
&\qquad\qquad\quad\equiv
-(1-q)^{2t+1}\sum_{i=0}^t
\left(t\binom{t-1}{i}+(t-1)\binom{t}{i}\right)
(-1)^{i}K_{2t-i}(p)
\pmod{\qq{p}}.
\end{split}
\end{equation*}
\end{theorem}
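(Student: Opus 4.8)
The plan is to reduce both congruences to the homogeneous evaluations of Theorems \ref{T3.1} and \ref{T3.3}. The key observation is that the stated right-hand sides are nothing but $-(1-q)$ times those evaluations: by Theorem \ref{T3.1} the claimed answer for the strict sums equals $-(1-q)\,H_{p-1}^q(\{2\}^t)$, and by Theorem \ref{T3.3} the answer for the non-strict sums equals $-(1-q)\,S_{p-1}^q(\{2\}^t)$. Hence it suffices to prove the two reduction congruences
\begin{align*}
H_{p-1}^q(\{2\}^a,3,\{2\}^b)+H_{p-1}^q(\{2\}^b,3,\{2\}^a)&\equiv -(1-q)\,H_{p-1}^q(\{2\}^t),\\
S_{p-1}^q(\{2\}^a,3,\{2\}^b)+S_{p-1}^q(\{2\}^b,3,\{2\}^a)&\equiv -(1-q)\,S_{p-1}^q(\{2\}^t)
\end{align*}
modulo $[p]_q$, after which the explicit formulas follow at once (noting $-(-1)^t=(-1)^{t-1}$ for the strict case).

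For the strict case I would first apply the reversal relation \eqref{EE1} to the string $\mathbf{s}=(\{2\}^a,3,\{2\}^b)$, whose weight $w(\mathbf{s})=2t+1$ is odd and whose reversal is $\overline{\mathbf{s}}=(\{2\}^b,3,\{2\}^a)$, obtaining
$$
H_{p-1}^q(\{2\}^a,3,\{2\}^b)\equiv -\,H_{p-1}^q(\{2\}^b,3,\{2\}^a;\{1\}^t)\pmod{[p]_q}.
$$
The crucial point is that $H_{p-1}^q(\{2\}^b,3,\{2\}^a;\{1\}^t)$ has exactly the same summation range and the same denominator $\prod_{j}[k_j]_q^2\cdot[k_{b+1}]_q$ as the ordinary sum $H_{p-1}^q(\{2\}^b,3,\{2\}^a)$; they differ only in the $q$-weight carried by the distinguished central index $k_{b+1}$, namely an extra factor $q^{k_{b+1}}$ in the ordinary sum against $1$ in the all-ones version. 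Consequently their difference is the single sum
$$
H_{p-1}^q(\{2\}^b,3,\{2\}^a)-H_{p-1}^q(\{2\}^b,3,\{2\}^a;\{1\}^t)
=\sum_{1\le k_1<\cdots<k_t\le p-1}\frac{q^{k_{b+1}}-1}{[k_{b+1}]_q}\prod_{j=1}^t\frac{q^{k_j}}{[k_j]_q^2},
$$
and since $q^{k}-1=-(1-q)[k]_q$ the central factor collapses to the constant $-(1-q)$, leaving precisely $-(1-q)\,H_{p-1}^q(\{2\}^t)$. Adding the two original quasi-homogeneous sums and invoking the displayed reversal gives the first reduction congruence, and Theorem \ref{T3.1} then yields the first assertion.

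The non-strict case runs verbatim with \eqref{EE2} in place of \eqref{EE1}, the only change being that every inequality $<$ is replaced by $\le$; the termwise identity $q^{k}-1=-(1-q)[k]_q$ and the cancellation of the central weight are insensitive to this, and Theorem \ref{T3.3} supplies the final evaluation. I expect no genuine obstacle beyond careful bookkeeping: one must verify that reversal sends the distinguished index at position $a+1$ to position $b+1=t-a$, so that after applying the duality the two quasi-homogeneous sums become comparable term by term, and one should note that the hypothesis $p>2t+1$ is exactly what is needed to apply Theorem \ref{T3.3} (the strict part alone would require only $p>t$). The pleasant feature of the argument is that, after the reversal, inserting a $3$ into a block of $2$'s amounts to lowering a single $q$-exponent, which manufactures the uniform factor $-(1-q)$ independently of the splitting $a+b=t-1$.
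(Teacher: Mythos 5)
Your proposal is correct and is essentially the paper's own proof: the paper likewise establishes the reduction $H_{p-1}^q(\{2\}^a,3,\{2\}^b)+H_{p-1}^q(\{2\}^b,3,\{2\}^a)\equiv -(1-q)H_{p-1}^q(\{2\}^t)$ (and its non-strict analogue) by combining the reversal congruence \eqref{EE1} (resp.\ \eqref{EE2}) with the same termwise identity $q^{2n}=q^{n}-q^{n}(1-q^{n})$ that converts the inserted $3$ into a factor $-(1-q)$, and then invokes Theorems \ref{T3.1} and \ref{T3.3}. The only difference is the immaterial order of the two steps (the paper does the algebraic absorption first and the reversal second, you do the reverse), so there is nothing to correct.
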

\begin{proof}
We have
\begin{equation*}
\begin{split}
&\quad H_{p-1}^q(\{2\}^a,3,\{2\}^b)+(1-q)H_{p-1}^q(\{2\}^t) \\[3pt]
&=\sum_{k_1<k_2<\ldots<k_a}
\frac{q^{k_1+\dots+k_a}}{[k_1]_q^2\dots[k_a]_q^2}
\sum_{k_a<n<k_{a+1}}\frac{q^{2n}+q^n(1-q^n)}{[n]_q^3}
\sum_{k_{a+1}<k_{a+2}<\dots<k_{a+b}}
\frac{q^{k_{a+1}+\dots+k_{a+b}}}{[k_{a+1}]^2\dots[k_{a+b}]^2} \\[3pt]
&=H_{p-1}^q(\{2\}^a,3,\{2\}^b; \{1\}^{a+b+1})
\equiv -H_{p-1}^q(\{2\}^b,3,\{2\}^a) \pmod{\qq{p}},
\end{split}
\end{equation*}
where in the last congruence we used \eqref{EE1}.
Hence
$$H_{p-1}^q(\{2\}^a,3,\{2\}^b)+H_{p-1}^q(\{2\}^b,3,\{2\}^a)
\equiv
-(1-q)H_{p-1}^q(\{2\}^t)
\pmod{\qq{p}}$$
which, by \eqref{CC2}, implies the first  congruence of the theorem.
In a similar way, we show that
$$S_{p-1}^q(\{2\}^a,3,\{2\}^b)+S_{p-1}^q(\{2\}^b,3,\{2\}^a)
\equiv
-(1-q)S_{p-1}^q(\{2\}^t)
\pmod{\qq{p}}$$
and then by Theorem \ref{T3.3}, we conclude the proof.
\end{proof}

\section{Duality for multiple $q$-harmonic (non-strict) sums}

\noindent In \cite{Di95}, Dilcher established the following identity: for any pair
of positive integers $n$, $s$,
\begin{equation}\label{D}
\sum_{k=1}^n \qbin{n}{k}\frac{(-1)^{k}
q^{\binom{k}{2}+sk}}{\qq{k}^{s} }
=-\sum_{1\leq j_1\leq j_2\leq\cdots\leq j_s\le n}
\prod_{i=1}^s\frac{q^{j_i}}{\qq{j_i}}.
\end{equation}
Identity \eqref{D} is a generalization of the case $s=1$ due to Van Hamme \cite{VH82} and by
taking the limit as $n$ goes to infinity one obtains a remarkable $q$-series
related to overpartitions and divisor generating functions. For example, if $s=1$ and $|q|<1$ then
$$
\sum_{k=1}^{\infty} \frac{(-1)^{k-1}
q^{\binom{k+1}{2}}}{(1-q)(1-q^2)\cdots(1-q^{k-1})(1-q^{k})^2 }
=\sum_{j=1}^{\infty}\frac{q^j}{1-q^{j}}=\sum_{j=1}^{\infty}d(j)q^j.
$$
where $d(j)$ is the number of divisors of $j$ (see the pioneering paper of Uchimura \cite{Uc81}).

\noindent With this motivation, several authors have recently investigated \eqref{D}
and they extended it along several directions: see for example
\cite{An13, Br05, FL03, FL05, GZ11, GZ13, IS12, MSS13, Pr01, Pr04, Xu14, Ze05}.
In~\cite{Pr00}, Prodinger shows the inversion of \eqref{D},
\begin{equation}\label{H}
\sum_{k=1}^n \qbin{n}{k}(-1)^{k}
q^{\binom{k}{2}-nk}\!\!\!\!\!\sum_{1\leq j_1\leq j_2\leq\dots\leq j_s=k}
\prod_{i=1}^s\frac{q^{j_i}}{\qq{j_i}}
=-\sum_{k=1}^n\frac{q^{(s-1)k}}{\qq{k}^{s}},
\end{equation}
which is the $q$-analog of a formula of Hernandez \cite{He99}.

In \cite{Br05}, Bradley extended identity (\ref{D}) to multiple $q$-binomial sums
$$
A_n(s_1, \ldots, s_l)=
\sum_{k=1}^n\qbin{n}{k}(-1)^{k-1}
q^{\binom{k}{2}+k}\sum_{1\le k_1\le k_2\le\cdots \le k_l=k}
\prod_{j=1}^l\frac{q^{(s_j-1)k_j}}{[k_j]_q^{s_j}},
$$
(note that we changed the order of summation here in comparison with Bradley's definition)
and proved a duality identity between  $q$-harmonic non-strict sums $S_{n}^q$ and $A_n$:
for positive integers $r, a_1, b_1, \ldots, a_r, b_r,$
\begin{equation} \label{BrD}
S_n^q\Bigl(a_1, \{1\}^{b_1-1}, \overset{r}{\underset{j=2}{{\bf Cat}}}\{a_j+1, \{1\}^{b_j-1}\};{\bf 1}\Bigr)=
A_n\Bigl(\overset{r-1}{\underset{j=1}{{\bf Cat}}}\{\{1\}^{a_j-1}, b_j+1\}, \{1\}^{a_r-1}, b_r\Bigr)
\end{equation}
where ${\bf Cat}_{j=1}^l\{s_j\}$ stands for the concatenated argument sequence $s_1, \ldots, s_l$
and ${\bf 1}$ denotes a vector whose all components are equal to $1$.
From (\ref{BrD}) we get a duality relation for multiple $q$-harmonic non-strict sums modulo $[p]_q$.
\begin{theorem} \label{T4.1}
Let $p$ be a prime. Then for positive integers $r, a_1, b_1, \ldots, a_r, b_r,$ we have
\begin{equation*}
\begin{split}
S_{p-1}^q&\Bigl(\overset{r-1}{\underset{j=1}{{\bf Cat}}}\{\{1\}^{a_j-1}, b_j+1\}, \{1\}^{a_r-1}, b_r\Bigr) \\
&\equiv (-1)^{a_1+b_1+\cdots+a_r+b_r}%
S_{p-1}^q\Bigl(\overset{r-1}{\underset{j=1}{{\bf Cat}}}\{\{1\}^{b_{r-j+1}-1}, a_{r-j+1}+1\}, \{1\}^{b_1-1}, a_1\Bigr) \pmod{[p]_q}.
\end{split}
\end{equation*}
\end{theorem}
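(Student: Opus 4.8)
The plan is to convert Bradley's identity \eqref{BrD}, which links a non-strict sum $S_n^q(\,\cdot\,;{\bf 1})$ to the $q$-binomial sum $A_n$, into a statement purely about non-strict sums $S_{p-1}^q$ with the canonical shift, and then to fold in the reversal congruence \eqref{EE2}. Throughout, write $P(a,b)$ for the argument on the $S$-side of \eqref{BrD} and $Q(a,b)$ for the argument of $A_n$ on its right-hand side, so that the left-hand side of the theorem is $S_{p-1}^q(Q(a,b))$ and its right-hand side is $S_{p-1}^q(Q(\widetilde a,\widetilde b))$ with $\widetilde a_j=b_{r-j+1}$ and $\widetilde b_j=a_{r-j+1}$.

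The first and decisive step is the auxiliary congruence
\begin{equation*}
A_{p-1}({\bf s})\equiv -S_{p-1}^q({\bf s}) \pmod{[p]_q},
\end{equation*}
valid for any composition ${\bf s}=(s_1,\dots,s_l)$. To prove it I would start from $[p-j]_q\equiv -q^{-j}[j]_q\pmod{[p]_q}$ (immediate since $q^p\equiv 1$), which telescopes to $\qbin{p-1}{k}\equiv(-1)^k q^{-\binom{k+1}{2}}\pmod{[p]_q}$. Feeding this into the definition of $A_n$ gives $\qbin{p-1}{k}(-1)^{k-1}q^{\binom{k}{2}+k}\equiv -1\pmod{[p]_q}$, because the $q$-exponent collapses: $\binom{k}{2}+k-\binom{k+1}{2}=0$. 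Hence every outer weight in $A_{p-1}({\bf s})$ is congruent to $-1$, and summing the inner nested sum over $k=k_l=1,\dots,p-1$ reconstitutes $-S_{p-1}^q({\bf s};{\bf s}-{\bf 1})=-S_{p-1}^q({\bf s})$, since the inner $q$-exponents $(s_j-1)k_j$ are exactly the canonical shift.

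With this in hand the proof is a short chain. Specializing the exact identity \eqref{BrD} to $n=p-1$ and then applying the auxiliary congruence gives
\begin{equation*}
S_{p-1}^q(P(a,b);{\bf 1})=A_{p-1}(Q(a,b))\equiv -S_{p-1}^q(Q(a,b)) \pmod{[p]_q},
\end{equation*}
so that $S_{p-1}^q(Q(a,b))\equiv -S_{p-1}^q(P(a,b);{\bf 1})$. Next I would rewrite $S_{p-1}^q(P(a,b);{\bf 1})$ through \eqref{EE2}: taking ${\bf s}=\overline{P(a,b)}$ there and using that reversal preserves weight yields $S_{p-1}^q(P(a,b);{\bf 1})\equiv (-1)^{w(P(a,b))}S_{p-1}^q(\overline{P(a,b)})$. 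The one combinatorial point to verify is that reversing the $P$-string produces the $A$-string with reversed-and-swapped parameters, i.e.\ $\overline{P(a,b)}=Q(\widetilde a,\widetilde b)$; this is a direct block-by-block comparison of the two explicit compositions. Combining the displays gives $S_{p-1}^q(Q(a,b))\equiv -(-1)^{w(P(a,b))}S_{p-1}^q(Q(\widetilde a,\widetilde b))$, and a routine count shows $w(P(a,b))=\sum_{j=1}^r(a_j+b_j)-1$, so that $-(-1)^{w(P(a,b))}=(-1)^{\sum_j(a_j+b_j)}$, which is precisely the claimed sign.

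The main obstacle is the auxiliary congruence $A_{p-1}\equiv -S_{p-1}^q$: it is the step that strips away the $q$-binomial weights and upgrades Bradley's $A$-versus-$S$ identity into an $S$-versus-$S$ duality modulo $[p]_q$, and it rests on the exact cancellation of the $q$-powers. The remaining work is only bookkeeping, namely the involution $\overline{P(a,b)}=Q(\widetilde a,\widetilde b)$ and the parity count that produces the sign.
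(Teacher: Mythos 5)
Your proposal is correct and follows essentially the same route as the paper: the paper also establishes $\qbin{p-1}{k}\equiv(-1)^k q^{-\binom{k+1}{2}}\pmod{[p]_q}$ (its equation \eqref{nk}), which is exactly your auxiliary congruence $A_{p-1}({\bf s})\equiv -S_{p-1}^q({\bf s})$ in disguise, and then combines Bradley's identity \eqref{BrD} with the reversal congruence of Theorem \ref{T3}. You have merely written out the bookkeeping (the identification $\overline{P(a,b)}=Q(\widetilde a,\widetilde b)$ and the weight count $w(P)=\sum_j(a_j+b_j)-1$) that the paper leaves as ``readily get the congruence.''
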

\begin{proof} For $1\leq j <p$, we have $\qq{p-j}=q^{-j}(\qq{p}-\qq{j})$, which implies that
\begin{equation} \label{nk}
\qbin{p-1}{k}
=\prod_{j=1}^{k}\frac{\qq{p-j}}{\qq{j}}
\equiv\prod_{j=1}^{k}(-q^{-j})
=(-1)^{k}q^{-\binom{k+1}{2}}
\pmod{\qq{p}}.
\end{equation}
Now by Theorem \ref{T3} and (\ref{BrD}), we readily get the congruence.
\end{proof}
In particular, for $r=1$ we obtain the following result.
\begin{corollary} \label{C3}
Let $p$ be a prime. Then for positive integers $a, b$ we have
$$
S_{p-1}^q(\{1\}^{a-1},b)\equiv(-1)^{a+b}\,S_{p-1}^q(\{1\}^{b-1},a) \pmod{[p]_q}.
$$
\end{corollary}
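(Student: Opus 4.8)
The plan is to obtain the corollary as the degenerate $r=1$ instance of Theorem \ref{T4.1}, in which both concatenated products $\overset{r-1}{\underset{j=1}{{\bf Cat}}}$ are empty. First I would set $r=1$, $a_1=a$, $b_1=b$ and read off what the two argument strings become: on the left the empty concatenation drops out and the tail $\{1\}^{a_r-1},b_r$ collapses to $\{1\}^{a-1},b$, while on the right it collapses to $\{1\}^{b-1},a$. The prefactor $(-1)^{a_1+b_1+\cdots+a_r+b_r}$ reduces to $(-1)^{a+b}$. This immediately yields the stated congruence, so at this level the only work is the index bookkeeping for the empty concatenation.

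If instead one wants a self-contained argument that does not invoke the full strength of Theorem \ref{T4.1}, I would reconstruct the short chain underlying it. I would start from Bradley's identity (\ref{BrD}) with $r=1$, where the concatenation $\overset{1}{\underset{j=2}{{\bf Cat}}}$ on the left is empty, giving the exact identity $S_{p-1}^q(a,\{1\}^{b-1};{\bf 1})=A_{p-1}(\{1\}^{a-1},b)$. Next I would reduce the $A$-sum modulo $\qq{p}$: using $\qbin{p-1}{k}\equiv(-1)^kq^{-\binom{k+1}{2}}\pmod{\qq{p}}$ from (\ref{nk}), the weight $\qbin{p-1}{k}(-1)^{k-1}q^{\binom{k}{2}+k}$ collapses to $-1$, since $-\binom{k+1}{2}+\binom{k}{2}+k=0$. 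Hence $A_{p-1}(\{1\}^{a-1},b)\equiv -S_{p-1}^q(\{1\}^{a-1},b)\pmod{\qq{p}}$.

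Finally I would rewrite the left-hand side $S_{p-1}^q(a,\{1\}^{b-1};{\bf 1})$ as a sum in the convention notation by applying the reversal duality of Theorem \ref{T3}. With ${\bf s}=(a,\{1\}^{b-1})$ and ${\bf t}={\bf 1}$, one has $w({\bf s})=a+b-1$, $\overline{\bf s}=(\{1\}^{b-1},a)$, and $\overline{\bf s}-\overline{\bf t}=(\{0\}^{b-1},a-1)$, so the target sum $S_{p-1}^q(\{1\}^{b-1},a;\{0\}^{b-1},a-1)$ is precisely $S_{p-1}^q(\{1\}^{b-1},a)$. Combining the three steps gives $-S_{p-1}^q(\{1\}^{a-1},b)\equiv(-1)^{a+b-1}S_{p-1}^q(\{1\}^{b-1},a)$, which rearranges to the claimed congruence. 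The only point requiring any care—and the closest thing to an obstacle here—is matching the $t$-vectors against the convention $S^q_{p-1}({\bf s})=S^q_{p-1}({\bf s};s_1-1,\dots,s_l-1)$ in this last step; beyond that bookkeeping there is no genuine difficulty.
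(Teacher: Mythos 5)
Your proposal is correct and is essentially the paper's own proof: the paper obtains Corollary \ref{C3} precisely by specializing Theorem \ref{T4.1} to $r=1$ (empty concatenations, $a_1=a$, $b_1=b$), which is exactly your first paragraph. Your optional self-contained argument—Bradley's identity \eqref{BrD} at $r=1$, the collapse of the $q$-binomial weight to $-1$ via \eqref{nk}, and the reversal duality of Theorem \ref{T3}—is not a different route either, since it just replays, in the case $r=1$, the same three ingredients the paper uses to prove Theorem \ref{T4.1} itself.
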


\begin{theorem}\label{T4.2}
Let $a, b$ be non-negative integers and $t$ be a positive integer. Then
\begin{equation*}
\begin{split}
&S_{p-1}^q(\{1\}^t)\equiv -K_t(p)(1-q)^t \pmod{[p]_q},\\
&S_{p-1}^q(1,\{2\}^{t-1},1)\equiv (1-q)^{2t}\sum_{j=0}^t\Bigl(t\binom{t-1}{j}+(t-1)\binom{t}{j}\Bigr)(-1)^{j+1}K_{2t-j}(p) \pmod{[p]_q}, \\
&S_{p-1}^q(\{1\}^{a},2,\{1\}^{b})+S_{p-1}^q(\{1\}^{b},2,\{1\}^{a}) \\
&\qquad\qquad\equiv (1-q)^{a+b+2}(K_{a+b+1}(p)-K_{a+b+2}(p)-K_{a+1}(p)K_{b+1}(p)) \pmod{[p]_q}.
\end{split}
\end{equation*}
\end{theorem}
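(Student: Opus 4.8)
The plan is to treat the three congruences separately: the first by generating functions, and the second and third by the Bradley duality of Theorem~\ref{T4.1} combined with the homogeneous formulas already established.

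For the first congruence I would start from the complete-homogeneous generating function
$$
\prod_{k=1}^{p-1}\left(1-\frac{x}{1-q^k}\right)^{-1}=\sum_{t=0}^{\infty}\frac{S_{p-1}^q(\{1\}^t)}{(1-q)^t}\,x^t,
$$
which holds because $S_{p-1}^q(\{1\}^t)=S_{p-1}^q(\{1\}^t;\{0\}^t)$ and expanding the reciprocal product records precisely the non-strict nested sums. Replacing $x$ by $-x$ in (\ref{eq:gen1}) evaluates the denominator modulo $[p]_q$ and gives
$$
\sum_{t=0}^{\infty}\frac{S_{p-1}^q(\{1\}^t)}{(1-q)^t}\,x^t\equiv\frac{px}{1-(1-x)^p}\pmod{[p]_q}.
$$
Setting $z=1-x$ in the defining generating function (\ref{qcon-1}) of $K_n(p)$ shows that the right-hand side is exactly $\sum_{n\ge0}\bigl(-K_n(p)\bigr)x^n$, so comparing coefficients of $x^t$ yields $S_{p-1}^q(\{1\}^t)\equiv -K_t(p)(1-q)^t$.

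For the second congruence I would recognize the palindrome $(1,\{2\}^{t-1},1)$ as an instance of the left-hand side of Theorem~\ref{T4.1}: with $r=t$, $a_1=2$, $a_2=\cdots=a_t=1$ and $b_1=\cdots=b_t=1$, the concatenated argument on the left collapses to $(1,\{2\}^{t-1},1)$, the dual argument on the right collapses to $\{2\}^t$, and the global sign is $(-1)^{\sum_j(a_j+b_j)}=(-1)^{2t+1}=-1$. Hence $S_{p-1}^q(1,\{2\}^{t-1},1)\equiv -S_{p-1}^q(\{2\}^t)\pmod{[p]_q}$, and Theorem~\ref{T3.3} supplies the claimed expression, the sign flip turning $(-1)^i$ into $(-1)^{i+1}$. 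For the third congruence I would apply the same duality to $(\{1\}^a,2,\{1\}^b)$ — taking $r=2$, $a_1=a+1$, $b_1=1$, $a_2=b$, $b_2=1$ when $b\ge1$, and the $r=1$ specialization $a_1=a+1,b_1=2$ when $b=0$ — which reduces the depth to two:
$$
S_{p-1}^q(\{1\}^a,2,\{1\}^b)\equiv(-1)^{a+b+1}S_{p-1}^q(b+1,a+1)\pmod{[p]_q}.
$$
Adding this to the image of the $a\leftrightarrow b$ swap and invoking the second (non-strict) formula of Lemma~\ref{L1} with $\alpha=a+1$, $\beta=b+1$ introduces $K_{a+1}(p)K_{b+1}(p)+K_{a+b+2}(p)-K_{a+b+1}(p)$ weighted by $(-1)^{a+b+2}(1-q)^{a+b+2}$; the two signs combine to $-1$ and reproduce the stated right-hand side $(1-q)^{a+b+2}\bigl(K_{a+b+1}(p)-K_{a+b+2}(p)-K_{a+1}(p)K_{b+1}(p)\bigr)$.

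The routine but error-prone part, and the step I expect to be the main obstacle, is the bookkeeping in matching each quasi-homogeneous string to Bradley's concatenation notation in Theorem~\ref{T4.1}: one must verify that the dual strings are exactly $\{2\}^t$ and $(b+1,a+1)$ and track the parity of $\sum_j(a_j+b_j)$ correctly. In particular the boundary cases $a=0$ or $b=0$, where one block of ones is empty and the positivity constraints on the $a_j,b_j$ force the $r=1$ parametrization rather than the generic $r=2$ one, must be checked separately to confirm that the uniform reduction formula still holds.
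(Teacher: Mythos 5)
Your proposal is correct; it coincides with the paper's proof on two of the three congruences and genuinely departs from it on the first. For the second congruence, your parametrization of Theorem~\ref{T4.1} ($r=t$, $a_1=2$, $a_2=\cdots=a_t=1$, all $b_j=1$) is the mirror image of the paper's ($a_1=\cdots=a_t=1$, $b_1=\cdots=b_{t-1}=1$, $b_t=2$); both produce the identical relation $S_{p-1}^q(1,\{2\}^{t-1},1)\equiv -S_{p-1}^q(\{2\}^t)\pmod{[p]_q}$, after which Theorem~\ref{T3.3} finishes exactly as you say. The third congruence is also the paper's argument verbatim: the reduction \eqref{S121} via $r=2$, $a_1=a+1$, $b_1=b_2=1$, $a_2=b$, then symmetrization in $a,b$ and Lemma~\ref{L1}. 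Here your separate treatment of the degenerate case is a genuine improvement in rigor: when $b=0$ the paper's choice $a_2=b=0$ violates the positivity hypothesis of Theorem~\ref{T4.1}, and your $r=1$ specialization (equivalently Corollary~\ref{C3} with $a$ replaced by $a+1$ and $b$ by $2$) shows that \eqref{S121} nevertheless persists; note that only $b=0$ is actually problematic, since $a=0$ gives $a_1=1$, which is admissible. Where you differ substantively is the first congruence. The paper gets it in one line: Corollary~\ref{C3} with $a=t$, $b=1$ gives $S_{p-1}^q(\{1\}^t)\equiv(-1)^{t+1}S_{p-1}^q(t)$, and since strict and non-strict sums coincide in depth one, Corollary~\ref{T2} evaluates $S_{p-1}^q(t)=H_{p-1}^q(t)\equiv(-1)^tK_t(p)(1-q)^t$. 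You instead expand the complete-homogeneous generating function $\prod_{k=1}^{p-1}\bigl(1-x/(1-q^k)\bigr)^{-1}$, invert congruence \eqref{eq:gen1} with $x\mapsto -x$ (legitimate: both sides have constant term $1$, hence are invertible as formal power series over the quotient ring, and all denominators involved are coprime to $[p]_q$), and identify $px/(1-(1-x)^p)$ with $-\sum_{n\ge 0}K_n(p)x^n$ via the substitution $z=1-x$ in \eqref{qcon-1}. This is correct and self-contained; it transplants the technique used for Theorem~\ref{T3.1} to non-strict sums and avoids Bradley's duality for this part, at the cost of a slightly longer argument, while delivering the whole family of congruences $S_{p-1}^q(\{1\}^t)\equiv -K_t(p)(1-q)^t$ at once from a single power-series identity.
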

\begin{proof} Setting $b=1,$ $a=t$ in Corollary \ref{C3}, we get the first congruence by Theorem \ref{T2}.
Setting $r=t,$ $a_1=\ldots=a_t=1,$ $b_1=\ldots=b_{t-1}=1,$ $b_t=2$ in Theorem \ref{T4.1}, we get
$$
S_{p-1}^q(\{2\}^t)\equiv -S_{p-1}^q(1,\{2\}^{t-1},1) \pmod{[p]_q},
$$
and the result follows immediately from Theorem \ref{T3.3}.
If we put $r=2,$ $a_1=a+1,$ $b_1=b_2=1,$ $a_2=b,$ we get
\begin{equation} \label{S121}
S_{p-1}^q(\{1\}^a,2,\{1\}^b)\equiv (-1)^{a+b+1}S_{p-1}^q(b+1,a+1) \pmod{[p]_q}.
\end{equation}
Now by Lemma \ref{L1}, we easily derive the last congruence of the theorem.
\end{proof}

\section{Duality for multiple $q$-harmonic (strict) sums}

\noindent In this section, we provide  further extensions of \eqref{D} and \eqref{H} and then consider application of the new identities to $q$-congruences.
\begin{theorem} \label{TwoId}
Let $n,s_1,s_2,\dots, s_l$ be positive integers. Then
\begin{align}\label{DE}
&\sum_{k=1}^n \qbin{n}{k}(-1)^{k}
q^{\binom{k}{2}+k}\!\!\!\!\sum_{1\leq k_1<k_2<\cdots<k_l=k}
\prod_{i=1}^l\frac{q^{(s_i-1)k_i}}{\qq{k_i}^{s_i}}
=(-1)^l\!\!\!
\underset{j_i<j_{i+1},\; i\in I}{\sum_{1\leq j_1\leq j_2\leq\dots\leq j_w\leq n}}
\prod_{i=1}^w\frac{q^{j_i}}{\qq{j_i}},\\
\label{HE}
&\sum_{k=1}^n \qbin{n}{k}(-1)^{k}
q^{\binom{k}{2}-nk}\!\!\!
\underset{j_i<j_{i+1},\; i\in I}{\sum_{1\leq j_1\leq j_2\leq\dots\leq j_w=k}}
\prod_{i=1}^w\frac{q^{j_i}}{\qq{j_i}}
=(-1)^l\!\!\!
\sum_{1\leq k_1<k_2<\cdots<k_l\leq n}
\prod_{i=1}^l\frac{q^{(s_i-1)k_i}}{\qq{k_i}^{s_i}}
\end{align}
where $w=\sum_{i=1}^ls_i$ and
$I=\{s_1,s_1+s_2,\dots,s_1+s_2+\dots+s_{l-1}\}$.
\end{theorem}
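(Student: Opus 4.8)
The plan is to regard \eqref{DE} and \eqref{HE} as the depth-$l$ generalizations of Dilcher's identity \eqref{D} and Prodinger's identity \eqref{H}, and to prove them simultaneously by induction on the depth $l$, the case $l=1$ being precisely \eqref{D} (for \eqref{DE}) and \eqref{H} (for \eqref{HE}). The guiding picture is that each of the $l$ strictly increasing variables $k_1<\cdots<k_l$ on the left of \eqref{DE} should expand, under a single application of Dilcher's transform, into one block of $s_i$ non-strict summation variables on the right, while the strict inequalities $k_i<k_{i+1}$ turn into exactly the strict jumps $j_i<j_{i+1}$ at the block boundaries $i\in I$. At each level of the induction I would first prove \eqref{DE} using \eqref{HE} at depth $l-1$, and then deduce \eqref{HE} at depth $l$ from \eqref{DE} at depth $l$; the tool throughout is the $q$-binomial (Gauss) inversion
\begin{equation*}
f_n=\sum_{k}\qbin{n}{k}g_k
\quad\Longleftrightarrow\quad
g_n=\sum_{k}(-1)^{n-k}q^{\binom{n-k}{2}}\qbin{n}{k}f_k,
\end{equation*}
which follows from $\qbin{n}{k}\qbin{k}{m}=\qbin{n}{m}\qbin{n-m}{k-m}$ and the vanishing $\sum_{i=0}^{N}(-1)^iq^{\binom{i}{2}}\qbin{N}{i}=\prod_{i=0}^{N-1}(1-q^i)$ for $N\ge 1$.

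For the inductive step of \eqref{DE} I would peel off the top level $k_l=k$, writing the left side as $\sum_{k=1}^n\qbin{n}{k}(-1)^kq^{\binom{k}{2}+s_lk}\qq{k}^{-s_l}\,\Theta^{(l-1)}(k-1)$, where $\Theta^{(l-1)}(m)$ is the depth-$(l-1)$ strict sum on $s_1,\dots,s_{l-1}$ with all indices $\le m$. The inductive hypothesis \eqref{HE} at depth $l-1$ rewrites $\Theta^{(l-1)}(k-1)$ as a $q$-binomial transform with kernel $q^{\binom{k'}{2}-(k-1)k'}$ of the block data $b^{(l-1)}_{k'}=\sum_{1\le j_1\le\cdots\le j_{w'}=k',\ \mathrm{strict\ at}\ I'}\prod q^{j_i}/\qq{j_i}$, where $w'=s_1+\cdots+s_{l-1}$ and $I'=\{s_1,\dots,s_1+\cdots+s_{l-2}\}$. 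After substituting and interchanging summations, the inner sum over $k$ (with $k>k'$) is a \emph{relative} form of Dilcher's identity: it must convert the single factor $q^{\binom{k}{2}+s_lk}\qq{k}^{-s_l}$, weighted by $\qbin{n}{k}\qbin{k-1}{k'}$, into the top block $\sum_{k'<j_1\le\cdots\le j_{s_l}\le n}\prod q^{j_i}/\qq{j_i}$, the lower bound $k'$ producing exactly the strict jump at the new boundary $w'\in I$. Gluing this block above $b^{(l-1)}_{k'}$ then yields $(-1)^l\Psi(n)$, the right side of \eqref{DE}, the sign $(-1)^l=-(-1)^{l-1}$ being supplied by the extra minus sign of the single-block step.

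Identity \eqref{HE} at depth $l$ is obtained in parallel: starting from \eqref{DE} at depth $l$, I would apply the inversion above to pass from the kernel $q^{\binom{k}{2}+k}$ to the kernel $q^{\binom{k}{2}-nk}$, mirroring Prodinger's derivation of \eqref{H} from \eqref{D}. Because \eqref{DE} involves the partial sum $\Psi(n)=\sum_{m\le n}b_m$ while \eqref{HE} involves the individual blocks $b_k$ together with $\Theta(n)=\sum_{m\le n}a_m$, this passage also needs a summation by parts relating a sequence to its sequence of partial sums; I expect this to telescope cleanly because $\Psi(n)-\Psi(n-1)=b_n$ isolates precisely the term whose top index equals $n$.

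The main obstacle is the relative Dilcher identity in the inductive step, that is, the evaluation of the inner $k$-sum under the two-sided constraint $k'<k\le n$ with the extra Gaussian factor $\qbin{k-1}{k'}$: this is where both the correct strict jump and the precise power of $q$ are created, and where the exponents $\binom{k}{2}$, $s_lk$ and $(k-1)k'$, along with the signs, must be tracked exactly. I would isolate it as a separate lemma (a Dilcher-type identity on an interval, reducing to \eqref{D} when $k'=0$), establish it by the same orthogonality/inversion, and feed it into the induction; once this lemma is available, the concatenation of the blocks and the appearance of the index set $I$ become purely formal.
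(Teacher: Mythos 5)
Your plan is essentially correct, and it is a genuinely different organization from the paper's. The paper never inducts on \eqref{DE} and \eqref{HE} themselves: it introduces the auxiliary \emph{unweighted} identity
\begin{equation*}
\sum_{k=1}^n \qbin{n}{k}(-1)^{k}q^{\binom{k}{2}}H^q_{k-1}({\bf s})=(-1)^{l+1}T^q_{n-1}({\bf s})
\end{equation*}
(its \eqref{I3}, where $T^q$ denotes the mixed strict/non-strict sums), proves it by induction on the depth by peeling off the last exponent through the recursion \eqref{I2}, and then obtains \eqref{DE} and \eqref{HE} simultaneously from a transform lemma (\eqref{TC} and \eqref{TCD}): whenever $(a_k)$, $(b_k)$ form a $q$-binomial transform pair \eqref{BT}, the $\qq{k}^{-r}$-weighted transform of $a$ equals the $r$-fold non-strict sum over $b$, and the Prodinger-type inverse holds. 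Thus all weights and blocks are installed \emph{after} the induction, by iterating the one-line summation \eqref{C1}. You instead run the induction at the weighted level (\eqref{HE} at depth $l-1$ $\Rightarrow$ \eqref{DE} at depth $l$ $\Rightarrow$ \eqref{HE} at depth $l$), which exhibits the block structure and the strict jumps at $I$ very directly. Your second step is sound as stated: it is exactly Lemma 1 of \cite{Pr00} (the paper's \eqref{TCD}) applied to the pair $A_k=q^k c_k$, $B_n=(-1)^l\Psi(n)$, and it telescopes via $\Psi(n)-\Psi(n-1)=b_n$ just as you predict.

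The one real hole is the step you yourself flag: the ``relative Dilcher identity'' with the double Gaussian weight $\qbin{n}{k}\qbin{k-1}{k'}$ carries the entire combinatorial content of your induction, and asserting that it can be established ``by the same orthogonality/inversion'' is not yet a proof. It is in fact true, and your suggested route does work: applying the transform lemma \eqref{TC} with $a_k=\qbin{k-1}{m}(-1)^{m}q^{\binom{m}{2}-(k-1)m}$ reduces it to the orthogonality relation
\begin{equation*}
\sum_{k=m+1}^{n}\qbin{n}{k}\qbin{k-1}{m}(-1)^{k-m}q^{\binom{k}{2}+\binom{m}{2}-(k-1)m}=-1\qquad(n>m),
\end{equation*}
which can be proved by induction on $n$: the difference of consecutive left-hand sides vanishes because $\qbin{n+1}{k}-\qbin{n}{k}=q^{n+1-k}\qbin{n}{k-1}$, $\qbin{n}{k-1}\qbin{k-1}{m}=\qbin{n}{m}\qbin{n-m}{k-1-m}$, and $\sum_{j=0}^{N}(-1)^{j}q^{\binom{j}{2}}\qbin{N}{j}=0$ for $N\ge1$. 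With this lemma written out, your induction closes and yields a self-contained alternative to the paper's argument; without it, the proposal is incomplete precisely at the point where the paper's transform lemma does its work.
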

\noindent The proof of Theorem \ref{TwoId} will be given in the next section.

\noindent  From \eqref{DE} with $n$ replaced by $p-1$, where $p$ is a prime, with the help of congruence \eqref{nk} we get a kind of duality for multiple $q$-harmonic $H_n^q$ sums.
\begin{theorem} \label{T6.1}
Let $p$ be a prime and $s_1, \ldots, s_l$ be positive integers. Then
$$
H_{p-1}^q(s_1,\ldots,s_l)\equiv (-1)^l \underset{j_i<j_{i+1},\; i\in I}{\sum_{1\leq j_1\leq j_2\leq\dots\leq j_w\leq p-1}}
\prod_{i=1}^w\frac{q^{j_i}}{\qq{j_i}} \pmod{[p]_q},
$$
where $w=\sum_{i=1}^ls_i$ and
$I=\{s_1,s_1+s_2,\dots,s_1+s_2+\dots+s_{l-1}\}$.
\end{theorem}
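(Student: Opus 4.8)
The plan is to specialize the $q$-identity \eqref{DE}—established in the next section—to $n=p-1$ and then reduce the Gaussian $q$-binomial coefficient modulo $\qq{p}$ via \eqref{nk}. First I would set $n=p-1$ in \eqref{DE}. With this choice the right-hand side of \eqref{DE} is already verbatim the expression appearing in the theorem, so all the work concentrates on the left-hand side, and the claim will follow once I identify that left-hand side with $H_{p-1}^q(s_1,\ldots,s_l)$ modulo $\qq{p}$.

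On the left-hand side I would replace $\qbin{p-1}{k}$ by its residue $(-1)^k q^{-\binom{k+1}{2}}$ furnished by \eqref{nk}. Two simplifications then occur. The sign factors combine as $(-1)^k(-1)^k=(-1)^{2k}=1$, and the accumulated power of $q$ is governed by the exponent
$$-\binom{k+1}{2}+\binom{k}{2}+k=-k+k=0,$$
using $\binom{k+1}{2}-\binom{k}{2}=k$. Hence, modulo $\qq{p}$, the left-hand side of \eqref{DE} collapses to
$$\sum_{k=1}^{p-1}\ \sum_{1\le k_1<\cdots<k_l=k}\prod_{i=1}^l\frac{q^{(s_i-1)k_i}}{\qq{k_i}^{s_i}}.$$

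Finally I would observe that letting $k=k_l$ range over $1,\ldots,p-1$ merges the outer sum over $k$ with the inner strict sum into a single strict multiple sum over $1\le k_1<\cdots<k_l\le p-1$, namely
$$\sum_{1\le k_1<\cdots<k_l\le p-1}\prod_{i=1}^l\frac{q^{(s_i-1)k_i}}{\qq{k_i}^{s_i}},$$
which is precisely $H_{p-1}^q(s_1,\ldots,s_l)$ because the weight vector $(s_1-1,\ldots,s_l-1)$ in the numerator matches the convention $H_n^q(\mathbf{s})=H_n^q(\mathbf{s};s_1-1,\ldots,s_l-1)$. Equating the two sides of \eqref{DE} under these reductions yields the stated congruence. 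Given \eqref{DE}, there is no genuine obstacle here: the argument is a direct substitution, and the only points demanding care are the exponent cancellation above and the bookkeeping that the index $k$ sweeps all of $\{1,\ldots,p-1\}$ so that the two nested sums telescope into the single multiple $q$-harmonic sum.
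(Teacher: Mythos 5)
Your proposal is correct and is exactly the paper's argument: the paper also obtains Theorem \ref{T6.1} by setting $n=p-1$ in \eqref{DE} and reducing $\qbin{p-1}{k}$ modulo $\qq{p}$ via \eqref{nk}. Your exponent cancellation $-\binom{k+1}{2}+\binom{k}{2}+k=0$ and the merging of the outer sum over $k=k_l$ into the strict multiple sum are precisely the details the paper leaves implicit.
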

\noindent
As an application of the duality relation above, we prove the following congruence for quasi-homogeneous sums.
\begin{theorem} \label{T6.2}
For any integers $a,b\geq 0$  and
for any prime $p>t$, where $t=a+b+1$,
\begin{equation}\label{CC1}
H^q_{p-1}(\{1\}^{a},2,\{1\}^{b})
+H^q_{p-1}(\{1\}^{b},2,\{1\}^{a})\equiv
-\binom{p+1}{t+2}\frac{(1-q)^{t+1}}{p}
\pmod{\qq{p}}.
\end{equation}
\end{theorem}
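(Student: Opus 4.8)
The plan is to reduce both quasi-homogeneous sums to the homogeneous sums $H_{p-1}^q(\{1\}^t)$ and $H_{p-1}^q(\{1\}^{t+1})$ by means of the duality of Theorem \ref{T6.1}. First I would apply that theorem to each summand. For $\mathbf{s}=(\{1\}^a,2,\{1\}^b)$ we have $l=t$, weight $w=t+1$, and the index set $I=\{1,\dots,t\}\setminus\{a+1\}$, so that
\[
H_{p-1}^q(\{1\}^a,2,\{1\}^b)\equiv(-1)^t A_{a+1},\qquad H_{p-1}^q(\{1\}^b,2,\{1\}^a)\equiv(-1)^t A_{b+1}\pmod{\qq{p}},
\]
where $A_k$ denotes the sum of $\prod_{i=1}^{t+1}q^{j_i}/\qq{j_i}$ over $1\le j_1\le\cdots\le j_{t+1}\le p-1$ with strict inequalities at every gap except possibly the $k$-th. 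Since the summand depends only on the underlying multiset, each contributing multiset carries at most one repeated value, so $A_k=S+E_k$, where $S=H_{p-1}^q(\{1\}^{t+1};\{1\}^{t+1})$ is the fully strict sum and $E_k$ collects those multisets whose unique repeated value sits in sorted position $k$. Thus $A_{a+1}+A_{b+1}=2S+E_{a+1}+E_{b+1}$, and it remains to evaluate $S$ and the two diagonal terms.

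The value of $S$ is immediate: by \eqref{EE1}, $S\equiv(-1)^{t+1}H_{p-1}^q(\{1\}^{t+1})$. The crux is $E_k$. Writing it out, the repeated value $m$ contributes $q^{2m}/\qq{m}^2$ and is flanked by $k-1$ strictly smaller and $t-k$ strictly larger indices, so $E_k=H_{p-1}^q(\mathbf{s}_k;\mathbf{s}_k)$ with $\mathbf{s}_k=(\{1\}^{k-1},2,\{1\}^{t-k})$, that is, a strict $(1,\dots,1,2,1,\dots,1)$-sum carrying the full numerators $q^{j_i}$. The key step is to apply the general reversal duality of Theorem \ref{T3} with $\mathbf{t}=\mathbf{s}_k$: then $\overline{\mathbf{s}_k}-\overline{\mathbf{s}_k}=\mathbf{0}$, so $E_k\equiv(-1)^{t+1}H_{p-1}^q(\overline{\mathbf{s}_k};\mathbf{0})$, a quasi-homogeneous sum with all numerators equal to $1$. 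Splitting the central slot by $1/\qq{n}^2=q^n/\qq{n}^2+(1-q)/\qq{n}$ (using $1-q^n=(1-q)\qq{n}$) turns the ``$2$'' either into a standard central $2$ or into a merged $1$, giving
\[
H_{p-1}^q(\overline{\mathbf{s}_k};\mathbf{0})=H_{p-1}^q(\overline{\mathbf{s}_k})+(1-q)H_{p-1}^q(\{1\}^t).
\]
Since $\overline{\mathbf{s}_{a+1}}=(\{1\}^b,2,\{1\}^a)$ and $\overline{\mathbf{s}_{b+1}}=(\{1\}^a,2,\{1\}^b)$, summing the two cases yields $E_{a+1}+E_{b+1}\equiv(-1)^{t+1}\bigl(X+2(1-q)H_{p-1}^q(\{1\}^t)\bigr)$, where $X$ denotes the left-hand side of \eqref{CC1}.

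Finally I would assemble the pieces. Combining $X\equiv(-1)^t\bigl(2S+E_{a+1}+E_{b+1}\bigr)$ with the two evaluations above collapses to a single linear relation for $X$, namely $2X\equiv 2(-1)^tS-2(1-q)H_{p-1}^q(\{1\}^t)$, whence $X\equiv-H_{p-1}^q(\{1\}^{t+1})-(1-q)H_{p-1}^q(\{1\}^t)$. Substituting \eqref{111} for both homogeneous sums, rewriting $\binom{p-1}{l}/(l+1)=\binom{p}{l+1}/p$, and applying Pascal's rule $\binom{p}{t+2}+\binom{p}{t+1}=\binom{p+1}{t+2}$ gives exactly $-\binom{p+1}{t+2}(1-q)^{t+1}/p$. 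The main obstacle is the middle step: unlike the $(2,3,2)$ situation of Theorem \ref{Tx.x}, where the standard sum already carries full numerators and one telescoping identity closes the argument, here the standard $(1,2,1)$-sum has trivial numerators on the ones, so a direct split fails; routing the diagonal term $E_k$ through the $\mathbf{t}=\mathbf{s}_k$ reversal is what makes the relation self-referential in $X$ and lets the linear system close.
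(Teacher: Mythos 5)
Your proof is correct, and it lands on the same penultimate relation as the paper's — that the left-hand side of \eqref{CC1} is congruent to $-H^q_{p-1}(\{1\}^{t+1})-(1-q)H^q_{p-1}(\{1\}^{t})$ modulo $\qq{p}$ — followed by the identical evaluation via \eqref{111} and Pascal's rule; but your route to that relation is organized genuinely differently. The paper applies Theorem \ref{T6.1} to only \emph{one} of the two sums, then immediately reverses summation ($j_i\mapsto p-j_i$) inside the resulting non-strict sum, which converts every factor $q^{j_i}/\qq{j_i}$ into $-1/\qq{j_i}$; only after that does it split the single non-strict gap, and the mirrored sum $H^q_{p-1}(\{1\}^{b},2,\{1\}^{a})$ falls out of that split, so the desired relation appears directly with no equation to solve. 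You instead decompose first — into the strict part $S$ and the diagonals $E_k=H^q_{p-1}(\mathbf{s}_k;\mathbf{s}_k)$, which still carry full $q$-power numerators — and then reverse each piece separately: $S$ by \eqref{EE1}, and $E_k$ by Theorem \ref{T3} with the choice $\mathbf{t}=\mathbf{s}_k$, a use of that theorem in its full two-parameter strength that the paper never needs. The cost of your symmetric treatment of both summands is the final step $2X\equiv\cdots$, which requires dividing by $2$; this is legitimate (the congruences live in the ring of rational functions in $q$ whose denominators are coprime to the irreducible polynomial $\qq{p}$, where $2$ is a unit), but it deserves an explicit remark. In exchange, your bookkeeping treats the two quasi-homogeneous sums identically, and your closing observation — that the trivial numerators on the ones block the one-line telescoping trick used for Theorem \ref{Tx.x}, forcing the detour through duality — correctly identifies why the paper, too, handles this case by a different mechanism than the $(2,3,2)$ case.
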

\begin{proof}
By Theorem \ref{T6.1}, we easily  obtain the following
congruences modulo $\qq{p}$,
\begin{align*}
-H^q_{p-1}(\{1\}^{a},2,\{1\}^{b})
&\equiv(-1)^{a+b}
\!\!\!
\sum_{1\leq j_1<\dots<j_{a}<j_{a+1}\leq  j_{a+2}<j_{a+3}
<\dots<j_{t+1}\leq p-1}
\prod_{i=1}^{t+1}\frac{q^{j_i}}{\qq{j_i}}\\
&\equiv(-1)^{a+b}
\!\!\!
\sum_{1\leq j_{t+1}<\dots<j_{a+3}<j_{a+2}\leq  j_{a+1}<j_{a}
<\dots<j_{1}\leq p-1}
\prod_{i=1}^{t+1}\frac{q^{p-j_i}}{\qq{p-j_i}}\\
&\equiv
\!\!\!
\sum_{1\leq j_{t+1}<\dots<j_{a+3}<j_{a+2}\leq  j_{a+1}<j_{a}
<\dots<j_{1}\leq p-1}
\prod_{i=1}^{t+1}\frac{1}{\qq{j_i}}.
\end{align*}
The right-hand side of the last congruence can be decomposed as
$$
\sum_{1\leq j_{t+1}<\dots<j_{a+3}<j_{a+2}\leq  j_{a+1}<j_{a}
<\dots<j_{1}\leq p-1}
\!\!\!\!\!\!\!\!\!\!\!\!\!\!\!\!\!\!\!\!\!\!\!\!
\!\!\!\!\!\!\!\!\!\!\!\!\!\!\!
((1-q)\qq{j_{a+1}}+q^{j_{a+1}})\prod_{i=1}^{r}\frac{1}{\qq{j_i}}
\;\;\;\;\;\;\;\;+\!\!\!\!\!\!\!\!\!\!\!\!
\sum_{1\leq j_{t+1}<\dots<j_{1}\leq p-1}
\prod_{i=1}^{t+1}\frac{1}{\qq{j_i}}.
$$
Therefore
$$-H^q_{p-1}(\{1\}^{a},2,\{1\}^{b})\equiv
(1-q)H^q_{p-1}(\{1\}^{t})
+H^q_{p-1}(\{1\}^{b},2,\{1\}^{a})+H^q_{p-1}(\{1\}^{t+1}).
$$
Finally,  by employing congruence (\ref{111}), we obtain
\begin{align*}
H^q_{p-1}(\{1\}^{a},2,\{1\}^{b})
&+H^q_{p-1}(\{1\}^{b},2,\{1\}^{a})
\equiv
-(1-q)H^q_{p-1}(\{1\}^{t})-H^q_{p-1}(\{1\}^{t+1})\\
&\equiv
-(1-q)\binom{p-1}{t}\frac{(1-q)^{t}}{r}
-\binom{p-1}{t+1}\frac{(1-q)^{t+1}}{t+2}\\
&\equiv
-\binom{p+1}{t+2}\frac{(1-q)^{t+1}}{p}
\pmod{\qq{p}}.
\end{align*}
\end{proof}

\section{Proof of Theorem \ref{TwoId}}

\noindent In order to prove identities \eqref{DE} and \eqref{HE}, we need first some auxiliary statements.
\begin{lemma} Let $n,j$ be positive integers with $j\leq n$. Then
\begin{align}\label{C1}
\sum_{k=j}^n \qbin{k-1}{j-1}q^k&=\qbin{n}{j}q^j,\\
\label{C2}
\sum_{k=j}^n\qbin{n}{k}(-1)^{k}q^{\binom{k}{2}}
&=\qbin{n-1}{j-1}(-1)^{j}q^{\binom{j}{2}}.
\end{align}
\end{lemma}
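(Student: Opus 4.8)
The plan is to prove the two $q$-binomial identities \eqref{C1} and \eqref{C2} by standard induction on $n$, using the recurrence for the Gaussian binomial coefficient. Recall that $\qbin{n}{k}$ satisfies the two symmetric Pascal-type recurrences
\begin{equation*}
\qbin{n}{k}=\qbin{n-1}{k-1}+q^k\qbin{n-1}{k}=q^{n-k}\qbin{n-1}{k-1}+\qbin{n-1}{k},
\end{equation*}
together with the boundary values $\qbin{n}{0}=\qbin{n}{n}=1$. The whole proof is elementary; the only thing to watch is choosing the recurrence that matches the weight $q^k$ (respectively $q^{\binom{k}{2}}$) appearing in each sum.

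For \eqref{C1}, I would fix $j$ and induct on $n$. The base case $n=j$ reads $\qbin{j-1}{j-1}q^j=\qbin{j}{j}q^j$, i.e. $q^j=q^j$, which holds. For the inductive step, split off the top term $k=n$ from the sum on the left and apply the inductive hypothesis to the remaining sum over $k=j,\dots,n-1$:
\begin{equation*}
\sum_{k=j}^n \qbin{k-1}{j-1}q^k=\qbin{n}{j}q^j+\qbin{n-1}{j-1}q^n \quad\text{(inductive hypothesis gives the first term with $n-1$).}
\end{equation*}
More carefully, the hypothesis gives $\sum_{k=j}^{n-1}\qbin{k-1}{j-1}q^k=\qbin{n-1}{j}q^j$, so the full sum equals $\qbin{n-1}{j}q^j+\qbin{n-1}{j-1}q^n$. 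Writing $q^n=q^j\cdot q^{n-j}$ and invoking the recurrence $\qbin{n}{j}=\qbin{n-1}{j}+q^{n-j}\qbin{n-1}{j-1}$ collapses this to $\qbin{n}{j}q^j$, as desired.

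Identity \eqref{C2} is handled the same way, again inducting on $n$ with $j$ fixed; the base case $n=j$ is immediate. In the inductive step I would peel off the $k=n$ term, $\qbin{n}{n}(-1)^n q^{\binom{n}{2}}=(-1)^n q^{\binom{n}{2}}$, and reduce the remaining sum $\sum_{k=j}^{n-1}\qbin{n}{k}(-1)^k q^{\binom{k}{2}}$ to an $(n-1)$-sum by applying the recurrence $\qbin{n}{k}=\qbin{n-1}{k-1}+q^k\qbin{n-1}{k}$ inside it. The two resulting pieces telescope: the part carrying $\qbin{n-1}{k-1}$ reindexes to cancel against the isolated top term, while the part carrying $q^k\qbin{n-1}{k}$ is exactly $q^{\binom{k}{2}+k}=q^{\binom{k+1}{2}}$-weighted and matches the inductive hypothesis. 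The main (and only) obstacle is purely bookkeeping: keeping the exponents $\binom{k}{2}$, $\binom{k+1}{2}$ and the signs $(-1)^k$ aligned when the recurrence shifts the summation index, so that the pieces combine into the single closed form $\qbin{n-1}{j-1}(-1)^j q^{\binom{j}{2}}$. No deep idea is needed beyond the correct choice of recurrence at each stage.
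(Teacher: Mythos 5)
Your proof of \eqref{C1} is correct and is essentially the paper's argument in inductive clothing: the paper telescopes directly by writing $\qbin{k-1}{j-1}q^{k-j}=\qbin{k}{j}-\qbin{k-1}{j}$, which is the same use of the recurrence $\qbin{n}{j}=\qbin{n-1}{j}+q^{n-j}\qbin{n-1}{j-1}$ that drives your inductive step.

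For \eqref{C2}, however, the mechanics you describe would fail in two places. After applying $\qbin{n}{k}=\qbin{n-1}{k-1}+q^k\qbin{n-1}{k}$ inside the truncated sum, the piece carrying $q^k\qbin{n-1}{k}$ is
\begin{equation*}
\sum_{k=j}^{n-1}\qbin{n-1}{k}(-1)^{k}q^{\binom{k+1}{2}},
\end{equation*}
which carries the weight $q^{\binom{k+1}{2}}$, whereas the inductive hypothesis concerns the weight $q^{\binom{k}{2}}$; so this piece does \emph{not} match the inductive hypothesis. (Moreover, even if it did, the hypothesis would produce $\qbin{n-2}{j-1}$, and you never explain how that would turn into the $\qbin{n-1}{j-1}$ required in the claim.) Likewise, the piece carrying $\qbin{n-1}{k-1}$, after reindexing $k\mapsto k+1$, is the full sum $\sum_{k=j-1}^{n-2}\qbin{n-1}{k}(-1)^{k+1}q^{\binom{k+1}{2}}$, which cannot ``cancel against the isolated top term'' $(-1)^nq^{\binom{n}{2}}$: it has many terms. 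The correct bookkeeping is that these two pieces cancel \emph{each other} term by term over the range $j\le k\le n-2$, leaving exactly two boundary terms: the $k=j-1$ term of the first piece, namely $\qbin{n-1}{j-1}(-1)^{j}q^{\binom{j}{2}}$, which is the answer, and the $k=n-1$ term of the second piece, namely $(-1)^{n-1}q^{\binom{n}{2}}$, which annihilates the peeled-off top term. Once this is seen, neither the peeling nor the induction is needed: applying the recurrence to the whole sum $\sum_{k=j}^{n}$ and pairing consecutive terms gives \eqref{C2} in one stroke, and that is precisely the paper's proof, where the summand is exhibited as the difference $c_k-c_{k+1}$ with $c_k=\qbin{n-1}{k-1}(-1)^{k}q^{\binom{k}{2}}$ and $c_{n+1}=0$.
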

\begin{proof} Since
$$
\qbin{k}{j}=\qbin{k-1}{j-1}q^{k-j}+\qbin{k-1}{j},
$$
it follows that
$$\sum_{k=j}^n\qbin{k-1}{j-1}q^k=
q^j\sum_{k=j}^n\qbin{k-1}{j-1}q^{k-j}
=q^j\sum_{k=j}^n\left(\qbin{k}{j}-\qbin{k-1}{j}\right)=
\qbin{n}{j}q^j.$$
As regards \eqref{C2}, since
$$
\qbin{n}{k}=\qbin{n-1}{k-1}+q^k\qbin{n-1}{k},
$$
we have
\begin{align*}
\sum_{k=j}^n \qbin{n}{k}(-1)^{k}q^{\binom{k}{2}}&=
\sum_{k=j}^n\left(\qbin{n-1}{k-1}+q^k\qbin{n-1}{k}\right)(-1)^{k}q^{\binom{k}{2}}\\
&=\sum_{k=j}^n\left(\qbin{n-1}{k-1}(-1)^{k}q^{\binom{k}{2}}-
\qbin{n-1}{k}(-1)^{k+1}q^{\binom{k+1}{2}}\right)\\
&=\qbin{n-1}{j-1}(-1)^{j}q^{\binom{j}{2}}.
\end{align*}
\end{proof}

\noindent The following lemma provides some simple properties of the $q$-binomial transform of
$\{a_n\}_{n\geq 1}$ given by
\begin{equation}\label{BT}
b_n=\sum_{k=1}^n \qbin{n}{k}(-1)^{k}q^{\binom{k}{2}}a_k,
\end{equation}
and its inverse
\begin{equation}\label{IBT}
a_n=\sum_{k=1}^n \qbin{n}{k}(-1)^{k}q^{\binom{n-k}{2}}b_k.
\end{equation}

\begin{lemma} Let $\{a_n\}_{n\geq 1}$ and $\{b_n\}_{n\geq 1}$ be two sequences which satisfy \eqref{BT}.
Then for any positive integer $r$,
\begin{equation}\label{TC}
\sum_{k=1}^n \qbin{n}{k}\frac{(-1)^{k}
q^{\binom{k}{2}+rk}}{\qq{k}^{r}}\cdot a_{k}
=\sum_{1\leq k_1\leq k_2\leq\cdots\leq k_r\le n}
\prod_{i=1}^r\frac{q^{k_i}}{\qq{k_i}}\cdot b_{k_1},
\end{equation}
and
\begin{equation}\label{TCD}
\sum_{k=1}^n \qbin{n}{k}(-1)^{k}q^{\binom{k}{2}-nk}\!\!\!\!
\sum_{1\leq k_1\leq k_2\leq\cdots\leq k_r=k}
\prod_{i=1}^r\frac{q^{k_i}}{\qq{k_i}}\cdot b_{k_1}
=\sum_{k=1}^n
\frac{q^{(r-1)k}}{\qq{k}^r}\cdot a_{k}.
\end{equation}
\end{lemma}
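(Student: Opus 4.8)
The plan is to prove the two transform identities \eqref{TC} and \eqref{TCD} by viewing them as $q$-binomial transform statements: both sides are built from the pair $\{a_n\}$, $\{b_n\}$ linked by \eqref{BT} and \eqref{IBT}, so the strategy is to reduce each identity to the single-variable case $r=1$ and then iterate. First I would establish the base case $r=1$ of \eqref{TC}, namely
\[
\sum_{k=1}^n \qbin{n}{k}\frac{(-1)^{k}q^{\binom{k}{2}+k}}{\qq{k}}\,a_k
=\sum_{k=1}^n\frac{q^{k}}{\qq{k}}\,b_k .
\]
Expanding $b_k$ on the right via \eqref{BT} and swapping the order of summation, the inner sum becomes $\sum_{k=j}^n \qbin{n}{k}(-1)^k q^{\binom{k}{2}} q^k/\qq{k}$; the goal is to show this collapses to the corresponding coefficient on the left. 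Here the two combinatorial identities \eqref{C1} and \eqref{C2} are exactly the tools needed: \eqref{C2} handles the alternating $q^{\binom{k}{2}}$ sum and \eqref{C1} handles the geometric $q^k$ factor against $\qbin{k-1}{j-1}$.

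Next I would run an induction on $r$. Assuming \eqref{TC} holds for $r-1$, I would peel off the innermost summation variable $k_r$ in the nested sum $\sum_{1\le k_1\le\cdots\le k_r\le n}\prod q^{k_i}/\qq{k_i}\cdot b_{k_1}$. The key observation is that the nested structure is itself a $q$-binomial transform: summing over $k_r$ from $k_{r-1}$ to $n$ produces, via \eqref{C1}-type telescoping, a factor that matches an extra copy of $\qbin{n}{k}(-1)^k q^{\binom{k}{2}+k}/\qq{k}$ on the left. Concretely, I expect each application of the $r=1$ identity to convert one layer of the chain $\le$-sum into one extra factor of the summand $\qbin{n}{k}(-1)^k q^{\binom{k}{2}+k}/\qq{k}$, so that after $r$ steps the left-hand side of \eqref{TC} emerges. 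Since \eqref{IBT} is the stated inverse of \eqref{BT}, identity \eqref{TCD} is then obtained from \eqref{TC} by applying the inverse transform: substituting $a_k$ in terms of $b_k$ through \eqref{IBT} and simplifying the exponent $\binom{k}{2}-nk$ turns \eqref{TC} into \eqref{TCD}, the roles of $a$ and $b$ being interchanged and the chain-sum fixed at $k_r=k$.

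The main obstacle will be the bookkeeping in the inductive step: carefully interchanging the order of the nested summations $1\le k_1\le\cdots\le k_r\le n$ so that the outermost variable can be summed first, and verifying that the telescoping from \eqref{C1} and \eqref{C2} produces precisely the $q$-powers $q^{\binom{k}{2}+rk}$ and $\qq{k}^{-r}$ appearing on the left of \eqref{TC}, with no stray factors of $q$. A secondary delicate point is matching the exponent shift $q^{-nk}$ in \eqref{TCD} with the inverse-transform kernel $q^{\binom{n-k}{2}}$ from \eqref{IBT}; one must check that $\binom{n-k}{2}+\binom{k}{2}-nk$ combines correctly against $\binom{k}{2}-nk$ so that the inversion is clean. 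Once these exponent identities are confirmed, both \eqref{TC} and \eqref{TCD} follow, and Theorem~\ref{TwoId} is recovered by specializing $\{a_n\},\{b_n\}$ to the explicit strict/non-strict harmonic-sum sequences with the prescribed strict-inequality positions indexed by $I$.
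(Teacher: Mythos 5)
Your proposal is correct and takes essentially the same route as the paper: the paper's proof sets $a_k(i)=q^{ik}a_k/\qq{k}^i$ and establishes the one-step recursion $b_n(i+1)=\sum_{k=1}^n q^k b_k(i)/\qq{k}$ by swapping summations and applying \eqref{C1}, which is precisely your induction on $r$ driven by the $r=1$ case, and it then deduces \eqref{TCD} from the inverse transform (Prodinger's Lemma~1, i.e.\ \eqref{IBT} in difference form, after recognizing the chain-sum with $k_r=k$ as $b_k(r)-b_{k-1}(r)$), just as you propose. The only slips are cosmetic: the swapped inner sum should read $\sum_{k=j}^n\qbin{k}{j}q^k/\qq{k}$ rather than what you wrote, and \eqref{C2} is not actually needed for this lemma (the paper uses it only in the subsequent lemma for \eqref{I2}).
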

\begin{proof}
For $i\geq 0$, let
$$b_n(i)=\sum_{k=1}^n \qbin{n}{k}(-1)^{k}q^{\binom{k}{2}}a_k(i)\\
\quad\mbox{with}\quad a_k(i)=\frac{q^{ik}a_k}{\qq{k}^i}.$$
Hence, by \eqref{C1},
\begin{align*}
\sum_{k=1}^n \frac{q^k b_k(i)}{\qq{k}}&=
\sum_{k=1}^n \frac{q^k}{\qq{k}}
\sum_{j=1}^k \qbin{k}{j}(-1)^{j}q^{\binom{j}{2}}a_j(i)
=\sum_{j=1}^n (-1)^{j}q^{\binom{j}{2}}a_j(i)
\sum_{k=j}^n \qbin{k}{j} \frac{q^k}{\qq{k}}\\
&=
\sum_{j=1}^n
\frac{(-1)^{j}q^{\binom{j}{2}}a_j(i)}{\qq{j}}
\sum_{k=j}^n \qbin{k-1}{j-1}q^k
=
\sum_{j=1}^n \qbin{n}{j}\frac{(-1)^{j}q^{\binom{j}{2}+j}a_j(i)}{\qq{j}}\\
&=\sum_{j=1}^n \qbin{n}{j} (-1)^{j}q^{\binom{j}{2}}a_j(i+1)=b_n(i+1).
\end{align*}
Therefore
$$
b_n(r)=
\sum_{k_r=1}^n \frac{q^{k_r} b_{k_r}(r-1)}{\qq{k_r}}
=\sum_{k_r=1}^n \frac{q^{k_r}}{\qq{k_r}}
\sum_{k_{r-1}=1}^{k_r} \frac{q^{k_{r-1}} b_{k_{r-1}}(r-2)}{\qq{k_{r-1}}}
=\!\!\!\!\!\sum_{1\leq k_1\leq k_2\leq\cdots\leq k_r\le n}
\prod_{i=1}^r\frac{q^{k_i}}{\qq{k_i}}\cdot b_{k_1}
$$
and \eqref{TC} is proved.

\noindent Finally, by Lemma 1 in \cite{Pr00} (or by the use of the inverse relation \eqref{IBT}),
it follows that
$$\sum_{k=1}^n \qbin{n}{k}(-1)^{k}q^{\binom{k}{2}-nk}(b_k(r)-b_{k-1}(r))=\sum_{k=1}^n q^{-k} a_k(r),$$
which yields \eqref{TCD}.
\end{proof}

\subsection{Proof of identities \eqref{DE} and \eqref{HE}}
We begin with the following result.

\begin{lemma}
Let ${\bf s}\in ({\mathbb N}^{*})^l$ and
let $r$ be a positive integer. Then
\begin{align} \label{I2}
\sum_{k=1}^n \qbin{n}{k}(-1)^{k}q^{\binom{k}{2}}
H^q_{k-1}({\bf s},r)&=
\frac{q^n}{\qq{n}}\sum_{k=1}^n \qbin{n}{k}\frac{(-1)^{k}q^{\binom{k}{2}+(r-1)k}H^q_{k-1}({\bf s})}{\qq{k}^{r-1}}\nonumber\\
&\qquad\qquad\qquad
-\sum_{k=1}^n \qbin{n}{k}
\frac{(-1)^{k}q^{\binom{k}{2}+rk}H^q_{k-1}({\bf s})}{\qq{k}^{r}}.
\end{align}
\end{lemma}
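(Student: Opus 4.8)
The plan is to prove identity \eqref{I2} by manipulating the defining sum of $H_k^q({\bf s},r)$ so as to peel off the last index. Recall that
$$
H^q_{k-1}({\bf s},r)=\sum_{1\le m_1<\dots<m_l<m\le k-1}\frac{q^{(s_1-1)m_1+\dots+(s_l-1)m_l+(r-1)m}}{\qq{m_1}^{s_1}\cdots\qq{m_l}^{s_l}\,\qq{m}^{r}}
=\sum_{m=1}^{k-1}\frac{q^{(r-1)m}}{\qq{m}^{r}}\,H^q_{m-1}({\bf s}).
$$
Substituting this into the left-hand side of \eqref{I2} and interchanging the order of the two outer summations over $k$ and $m$, one obtains a sum over $m$ of $\frac{q^{(r-1)m}}{\qq{m}^{r}}H^q_{m-1}({\bf s})$ times an inner sum $\sum_{k=m+1}^{n}\qbin{n}{k}(-1)^{k}q^{\binom{k}{2}}$. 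The key computational input is \eqref{C2}, which evaluates exactly such a tail of the alternating $q$-binomial sum: with $j=m+1$ it gives
$$
\sum_{k=m+1}^n\qbin{n}{k}(-1)^{k}q^{\binom{k}{2}}
=\qbin{n-1}{m}(-1)^{m+1}q^{\binom{m+1}{2}}.
$$

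Thus the left-hand side of \eqref{I2} collapses to
$$
\sum_{m=1}^{n-1}\frac{q^{(r-1)m}}{\qq{m}^{r}}\,H^q_{m-1}({\bf s})\cdot\qbin{n-1}{m}(-1)^{m+1}q^{\binom{m+1}{2}}.
$$
First I would rewrite $\qbin{n-1}{m}$ in terms of $\qbin{n}{m}$ so that the summand matches the Gaussian coefficients appearing on the right-hand side of \eqref{I2}. The natural relation to use is the recurrence
$$
\qbin{n}{m}=\qbin{n-1}{m-1}+q^{m}\qbin{n-1}{m},
$$
equivalently $\qbin{n-1}{m}=\qbin{n}{m}-\qbin{n-1}{m-1}$, together with the identity $\qbin{n-1}{m-1}=\qbin{n}{m}\frac{\qq{m}}{\qq{n}}q^{\,?}$ obtained from the explicit product formula for the $q$-binomial coefficient. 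Concretely, from $\qbin{n}{m}=\frac{\qq{n}}{\qq{m}}\qbin{n-1}{m-1}q^{-(n-m)}$-type relations one extracts a factor $\frac{q^{n}}{\qq{n}}\qq{m}$, and this is precisely what produces the first term on the right of \eqref{I2}, carrying the extra $\frac{q^{n}}{\qq{n}}$ prefactor and lowering the exponent of $\qq{m}$ from $r$ to $r-1$; the remaining piece reproduces the second term with its sign.

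The main obstacle I anticipate is bookkeeping the powers of $q$ and the signs when passing from $q^{\binom{m+1}{2}}$ and $\qbin{n-1}{m}$ back to the form $(-1)^{m}q^{\binom{m}{2}+(r-1)m}$ and $(-1)^{m}q^{\binom{m}{2}+rm}$ that appear on the right-hand side: since $\binom{m+1}{2}=\binom{m}{2}+m$, the single extra $q^{m}$ must be correctly split between the two terms generated by the $q$-binomial recurrence, and the sign $(-1)^{m+1}$ versus $(-1)^{m}$ has to be tracked carefully. Once the Gaussian coefficient is decomposed via the recurrence, each resulting summand should align term-by-term with one of the two sums on the right of \eqref{I2}, so the remaining work is a routine—if delicate—reconciliation of exponents. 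This lemma will then serve as the inductive engine for Theorem \ref{TwoId}, with $r$ playing the role of the last string-length $s_l$ and $({\bf s})$ the preceding block.
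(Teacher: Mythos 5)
Your proposal follows the paper's proof in all essential respects: the paper likewise peels off the last index via the recursion $H^q_{k-1}({\bf s},r)=\sum_{m=1}^{k-1}q^{(r-1)m}H^q_{m-1}({\bf s})/\qq{m}^{r}$, interchanges the two summations, and evaluates the inner tail $\sum_{k=m+1}^{n}\qbin{n}{k}(-1)^{k}q^{\binom{k}{2}}$ by \eqref{C2}, exactly as you do. The only divergence is the very last step. The paper writes $\qbin{n-1}{m}=\frac{\qq{n-m}}{\qq{n}}\qbin{n}{m}$ and then uses the identity
\[
\frac{q^{n}}{\qq{n}}-\frac{q^{m}}{\qq{m}}=\frac{(-q^{m})\,\qq{n-m}}{\qq{n}\,\qq{m}},
\]
which produces the right-hand side of \eqref{I2} term by term, with the exact exponents. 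Your Pascal-recurrence finish also works, but with two corrections to your bookkeeping: the identities you need are $q^{m}\qbin{n-1}{m}=\qbin{n}{m}-\qbin{n-1}{m-1}$ (your ``equivalent'' form drops the $q^{m}$) and $\qbin{n-1}{m-1}=\frac{\qq{m}}{\qq{n}}\qbin{n}{m}$ (the power of $q$ you left as a question mark is $q^{0}$); moreover, contrary to your claim, the two pieces this split produces, namely
\[
\frac{1}{\qq{n}}\sum_{m=1}^{n}\qbin{n}{m}\frac{(-1)^{m}q^{\binom{m}{2}+(r-1)m}H^q_{m-1}({\bf s})}{\qq{m}^{r-1}}
\;-\;\sum_{m=1}^{n}\qbin{n}{m}\frac{(-1)^{m}q^{\binom{m}{2}+(r-1)m}H^q_{m-1}({\bf s})}{\qq{m}^{r}},
\]
do \emph{not} align term by term with \eqref{I2}: you get $\frac{1}{\qq{n}}$ and $q^{(r-1)m}$ where \eqref{I2} has $\frac{q^{n}}{\qq{n}}$ and $q^{rm}$. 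The two expressions are nonetheless equal summand by summand, because
\[
\frac{1-q^{n}}{\qq{n}\,\qq{m}^{r-1}}=\frac{1-q}{\qq{m}^{r-1}}=\frac{1-q^{m}}{\qq{m}^{r}},
\]
so your argument closes once this one further identity is supplied---it is precisely the ``delicate reconciliation'' you anticipated, and the paper's choice of decomposition is what makes it unnecessary.
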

\begin{proof} By using \eqref{C2}, and the recursive relation
\begin{align*}
H^q_{n}({\bf s},r)&=
\sum_{k=1}^{n}
\frac{q^{(r-1)k}H^q_{k-1}({\bf s})}{\qq{k}^r},
\end{align*}
we have
\begin{align*}
\sum_{k=1}^n \qbin{n}{k}(-1)^{k}q^{\binom{k}{2}}
H^q_{k-1}({\bf s},r)&=
\sum_{k=1}^n \qbin{n}{k}(-1)^{k}q^{\binom{k}{2}}
\sum_{j=1}^{k-1}
\frac{q^{(r-1)j}H^q_{j-1}({\bf s})}{\qq{j}^r}\nonumber\\
&=
\sum_{j=1}^n \frac{q^{(r-1)j}H^q_{j-1}({\bf s})}{\qq{j}^r}
\sum_{k=j+1}^n\qbin{n}{k}(-1)^{k}q^{\binom{k}{2}}\nonumber\\
&=
\sum_{j=1}^n \frac{q^{(r-1)j}H^q_{j-1}({\bf s})}{\qq{j}^r}
\left(\qbin{n-1}{j}(-1)^{j+1}q^{\binom{j}{2}+j}\right)\nonumber\\
&=\sum_{j=1}^n \frac{(-q^j)\qq{n-j}}{\qq{n}\qq{j}}\qbin{n}{j}
\frac{(-1)^{j}q^{\binom{j}{2}+(r-1)j}H^q_{j-1}({\bf s})}{\qq{j}^{r-1}},
\end{align*}
and the proof is complete as soon as we note that
$$
\frac{q^n}{\qq{n}}-\frac{q^j}{\qq{j}}
=\frac{q^n(1-q^j)-q^j(1-q^n)}{\qq{n}\qq{j}(1-q)}
=\frac{(-q^j)\qq{n-j}}{\qq{n}\qq{j}}.
$$
\end{proof}

\noindent In order to reduce the use of symbols indicating multiple nested sums,
we introduce
$$
T^q_{n}({\bf s}):=\sum_{}
\prod_{i=1}^{w({\bf s})} \frac{q^{j_i}}{\qq{j_i}}
$$
where the sum is intended to be taken over all integers
satisfying the conditions:
$$\mbox{
$1\leq j_i\leq n$, $j_i<j_{i+1}$ for $i\in\{s_1,s_1+s_2,\dots,s_1+s_2+\dots+s_{l-1}\}$ and
$j_i\leq j_{i+1}$ otherwise.}$$
Note that the following recursive relations hold
\begin{align}\label{TT}
T^q_{n}({\bf s},1)=\sum_{k=1}^n \frac{q^k T^q_{k-1}({\bf s})}{\qq{k}}
\quad\mbox{and}\quad
T^q_{n}({\bf s},r)=\sum_{k=1}^n \frac{q^kT^q_{k}({\bf s},r-1)}{\qq{k}}\quad\mbox{for $r>1$}.
\end{align}

\noindent In the next theorem, the identities \eqref{I4}
and \eqref{I5} are equivalent to \eqref{DE} and \eqref{HE} respectively.

\begin{theorem}
Let ${\bf s}\in ({\mathbb N}^{*})^l$ and
let $r$ be a positive integer. Then
\begin{align}\label{I3}
&\sum_{k=1}^n \qbin{n}{k}(-1)^{k}
q^{\binom{k}{2}}H^q_{k-1}({\bf s})=
(-1)^{l+1} T^q_{n-1}({\bf s}),\\
&\label{I4}
\sum_{k=1}^n \qbin{n}{k}\frac{(-1)^{k}
q^{\binom{k}{2}+rk}H^q_{k-1}({\bf s})}{\qq{k}^{r}}=
(-1)^{l+1} T^q_{n}({\bf s},r),\\
&\label{I5}
\sum_{k=1}^n \qbin{n}{k}(-1)^{k}q^{\binom{k}{2}-nk}\!\!\!\!
\sum_{1\leq k_1\leq k_2\leq\cdots\leq k_r=k}
\prod_{i=1}^r\frac{q^{k_i}}{\qq{k_i}}\cdot T^q_{k_1-1}({\bf s})
=(-1)^{l+1} H^q_{n}({\bf s},r).
\end{align}
\end{theorem}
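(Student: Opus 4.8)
The plan is to prove the three identities in the order \eqref{I3}, \eqref{I4}, \eqref{I5}, treating \eqref{I3} as the core statement and deducing the other two from it by means of the $q$-binomial transform lemma (equations \eqref{TC} and \eqref{TCD}) together with the recursions \eqref{TT}. As a preliminary bookkeeping step I would record the purely combinatorial identity
$$
\sum_{1\le k_1\le k_2\le\cdots\le k_r\le n}\prod_{i=1}^r\frac{q^{k_i}}{\qq{k_i}}\,T^q_{k_1-1}({\bf s})=T^q_n({\bf s},r),
$$
which follows by a short induction on $r$ from \eqref{TT}: summing over the largest index $k_r=k$ reduces the left side to $\sum_{k=1}^n \frac{q^k}{\qq{k}}T^q_k({\bf s},r-1)=T^q_n({\bf s},r)$. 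Combinatorially this just says that appending a block of length $r$ (a strict jump at position $w({\bf s})$ followed by weak increases inside the new block) to the configuration counted by $T^q_{\,\cdot}({\bf s})$ produces the configuration counted by $T^q_n({\bf s},r)$. I would also isolate the single-term recursion $T^q_n({\bf s},r)-T^q_{n-1}({\bf s},r)=\frac{q^n}{\qq{n}}T^q_n({\bf s},r-1)$, which is the $k=n$ summand of \eqref{TT}.

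Next I would prove \eqref{I3} by induction on the depth $l=l({\bf s})$. For $l=0$ the left side is $\sum_{k=1}^n\qbin{n}{k}(-1)^kq^{\binom k2}$, which equals $-1$ by \eqref{C2} with $j=1$, matching $(-1)^{1}T^q_{n-1}(\emptyset)=-1$. A key observation is that \eqref{I3} for a given ${\bf s}$ of depth $l$ already yields \eqref{I4} for the same ${\bf s}$: choosing $a_k=H^q_{k-1}({\bf s})$ makes $b_n=(-1)^{l+1}T^q_{n-1}({\bf s})$ by \eqref{BT} and \eqref{I3}, whence \eqref{TC} together with the combinatorial identity above gives $(-1)^{l+1}T^q_n({\bf s},r)$ on the right, which is exactly \eqref{I4}.

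For the inductive step I would apply \eqref{I2} to $({\bf s},r)$, having \eqref{I4} in hand for depth $l$. The two sums on the right-hand side of \eqref{I2} become $(-1)^{l+1}T^q_n({\bf s},r-1)$ (by \eqref{I4} with $r-1$, where I adopt the convention $T^q_n({\bf s},0):=T^q_{n-1}({\bf s})$ so that the boundary case $r=1$ is supplied directly by \eqref{I3}) and $(-1)^{l+1}T^q_n({\bf s},r)$. Substituting these and using the single-term recursion for $T$ collapses the expression to $(-1)^{l}T^q_{n-1}({\bf s},r)=(-1)^{(l+1)+1}T^q_{n-1}({\bf s},r)$, which is \eqref{I3} for $({\bf s},r)$ of depth $l+1$. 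This closes the induction and, in view of the deduction in the previous paragraph, simultaneously establishes \eqref{I4} at every depth.

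Finally, \eqref{I5} is the inverse relation and comes from \eqref{TCD}. Taking again $a_k=H^q_{k-1}({\bf s})$ and $b_{k_1}=(-1)^{l+1}T^q_{k_1-1}({\bf s})$, identity \eqref{TCD} reads
$$
(-1)^{l+1}\sum_{k=1}^n\qbin{n}{k}(-1)^kq^{\binom k2-nk}\!\!\!\sum_{1\le k_1\le\cdots\le k_r=k}\prod_{i=1}^r\frac{q^{k_i}}{\qq{k_i}}\,T^q_{k_1-1}({\bf s})=\sum_{k=1}^n\frac{q^{(r-1)k}}{\qq{k}^r}H^q_{k-1}({\bf s}),
$$
and the right-hand side is precisely $H^q_n({\bf s},r)$ by the recursion $H^q_n({\bf s},r)=\sum_{k=1}^n q^{(r-1)k}H^q_{k-1}({\bf s})/\qq{k}^r$ used in the proof of \eqref{I2}. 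Multiplying through by $(-1)^{l+1}$ gives \eqref{I5}. I expect the main obstacle to be the inductive step for \eqref{I3}: marshaling \eqref{I2}, \eqref{I4} for depth $l$, and the $T$-recursion at once, while correctly tracking the signs $(-1)^{l+1}$ and the $r=1$ boundary. The two ingredients that make that step go through cleanly are the combinatorial identity and the $k=n$ recursion for $T$, so verifying those is where the real work lies.
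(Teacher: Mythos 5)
Your proposal is correct and follows essentially the same route as the paper: a joint induction on the depth $l$ in which \eqref{I2} drives the inductive step for \eqref{I3}, the transform identity \eqref{TC} upgrades \eqref{I3} to \eqref{I4} at each depth, and \eqref{TCD} then yields \eqref{I5}. Your convention $T^q_n({\bf s},0):=T^q_{n-1}({\bf s})$ merely unifies the paper's case split $s=1$ versus $s>1$, and your explicitly stated combinatorial identity $\sum_{1\le k_1\le\cdots\le k_r\le n}\prod_{i=1}^r\frac{q^{k_i}}{\qq{k_i}}\,T^q_{k_1-1}({\bf s})=T^q_n({\bf s},r)$ is exactly the step the paper uses implicitly when passing from \eqref{TC} to \eqref{I4}.
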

\begin{proof}
We proceed by induction on the length $l$.

\noindent The base case $l=0$ of \eqref{I3}
is true because by \eqref{C2},
$$\sum_{k=1}^n \qbin{n}{k}(-1)^{k} q^{\binom{k}{2}}=-1.$$
The base case $l=0$ of \eqref{I4} follows from \eqref{TC} and the previous equation, and reads
$$\sum_{k=1}^n \qbin{n}{k}\frac{(-1)^{k}
q^{\binom{k}{2}+rk}}{\qq{k}^{r} }=-T^q_n(r).$$
In a similar way, for $l=0$, \eqref{TCD} implies \eqref{I5}.

\noindent Now assume that $l>0$ and let ${\bf s}=({\bf t},s)$. By \eqref{I2}, we have
\begin{align*}
\sum_{k=1}^n \qbin{n}{k}(-1)^{k}
q^{\binom{k}{2}}H^q_{k-1}({\bf t}, s)
&=\frac{q^n}{\qq{n}}\sum_{k=1}^n \qbin{n}{k}\frac{(-1)^{k}q^{\binom{k}{2}+(s-1)k}H^q_{k-1}({\bf t})}{\qq{k}^{s-1}}\nonumber\\
&\qquad\qquad\qquad
-\sum_{k=1}^n \qbin{n}{k}
\frac{(-1)^{k}q^{\binom{k}{2}+sk}H^q_{k-1}({\bf t})}{\qq{k}^{s}}.
\end{align*}
If $s=1$ then by the inductive step and \eqref{TT}, we get
\begin{align*}
\sum_{k=1}^n \qbin{n}{k}(-1)^{k}
q^{\binom{k}{2}}H^q_{k-1}({\bf t},1)
&=\frac{q^n }{\qq{n}}(-1)^{l({\bf t})+1}T^q_{n-1}({\bf t})
-(-1)^{l({\bf t})+1}T^q_{n}({\bf t},1)\\
&=(-1)^{l+1}\left(
T^q_{n}({\bf t},1)-\frac{q^nT^q_{n-1}({\bf t})}{\qq{n}}
\right)
=(-1)^{l+1} T^q_{n-1}({\bf s}).
\end{align*}
On the other hand, if $s>1$ then by the inductive step and \eqref{TT}, we obtain
\begin{align*}
\sum_{k=1}^n \qbin{n}{k}(-1)^{k}
q^{\binom{k}{2}}H^q_{k-1}({\bf t},s)
&=\frac{q^n}{\qq{n}}(-1)^{l({\bf t})+1}T^q_{n}({\bf t},s-1)
-(-1)^{l({\bf t})+1}T^q_{n}({\bf t},s)\\
&=(-1)^{l+1}\left(
T^q_{n}({\bf t},s)-\frac{q^nT^q_{n}({\bf t},s-1)}{\qq{n}}
\right)
=(-1)^{l+1} T^q_{n-1}({\bf s}),
\end{align*}
and therefore \eqref{I3} holds.

\noindent Putting
$a_n=H_{n-1}^q({\bf s})$ and
$b_n=(-1)^{l+1} T^q_{n-1}({\bf s})$
in \eqref{TC} and using \eqref{I3}, we immediately get
\begin{align*}
\sum_{k=1}^n \qbin{n}{k}\frac{(-1)^{k}
q^{\binom{k}{2}+kr}H^q_{k-1}({\bf s})}{\qq{k}^{r}}
&=\sum_{1\leq k_1\leq k_2\leq\cdots\leq k_r\le n}
\prod_{j=1}^r\frac{q^{k_j}}{\qq{k_j}}\cdot (-1)^{l+1} T^q_{k_1-1}({\bf s})\\
&=(-1)^{l+1} T^q_{n}({\bf s},r)
\end{align*}
and the proof of \eqref{I4} is complete.
Similarly, by \eqref{I3} and \eqref{TCD}, we obtain \eqref{I5}.
\end{proof}

\vspace{0.1cm}

\centerline{{\bf Acknowledgment.}}

\vspace{0.2cm}

 Kh.~Hessami Pilehrood and T.~Hessami Pilehrood acknowledge support from the Fields Institute Research Immersion Fellowships.

\medskip

\end{document}